\title[Division algebras in monoidal categories]{Division algebras in monoidal categories}
\author[J. Kesten]{Jacob Kesten}
\address{Kesten: Department of Mathematics, Rice University,
P.O. Box 1892, Houston, TX 77005, USA}
\email{jgk3@rice.edu}
\author[C. Walton]{Chelsea Walton}
\address{Walton: Department of Mathematics, Rice University,
P.O. Box 1892, Houston, TX 77005, USA}
\email{notlaw@rice.edu}
\thanks{CW was partially supported by the NSF Grant DMS-2348833, and an AMS Claytor-Gilmer Research Fellowship.}
\newcommand{\stkout}[1]{\ifmmode\text{\sout{\ensuremath{#1}}}\else\sout{#1}\fi}
\newcommand \blue {\textcolor{blue}}
\definecolor{forest}{rgb}{0.0, 0.55, 0.0}
\definecolor{BetterBlue}{rgb}{0.2, 0.4, 1.0}
\definecolor{BetterPurple}{rgb}{0.6, 0.3, 0.9}
\DeclareMathAlphabet{\cal}{OMS}{zplm}{m}{n}
\DeclareMathAlphabet{\mathsf}{OT1}{cmss}{m}{n} 
\newcommand{\rdual}[1]{\tensor*[^*]{#1}{}}
\newcommand{\varep}{\varepsilon}
\newcommand{\Id}{\textnormal{Id}}
\newcommand{\id}{\textnormal{id}}
\newcommand{\one}{\mathbbm{1}}
\newcommand{\Hom}{\textnormal{Hom}}
\newcommand{\actL}{\triangleright}
\newcommand{\actR}{\triangleleft}
\newcommand\natisom{\stackrel{\hbox{$\sim$\hspace{.02in}}}{\smash{\Rightarrow}\rule{0pt}{0.4ex}}}
\newcommand\equivto{\stackrel{\hbox{$\sim$\hspace{.02in}}}{\smash{\to}\rule{0pt}{0.3ex}}}
\newcommand{\Alg}{\mathsf{Alg}}
\newcommand{\Bimod}{\mathsf{Bimod}}
\newcommand{\End}{\textnormal{End}}
\newcommand{\Mod}{\mathsf{Mod}}
\newcommand{\mA}{\mathbb{A}}
\newcommand{\cA}{\cal{A}}
\newcommand{\cC}{\cal{C}}
\newcommand{\cB}{\cal{B}}
\newcommand{\cM}{\cal{M}}
\newcommand{\cN}{\cal{N}}
\numberwithin{equation}{section}
\newtheorem{theorem}{Theorem}[section]
\newtheorem{proposition}[theorem]{Proposition}
\newtheorem{lemma}[theorem]{Lemma}
\newtheorem{theorem*}{Theorem}
\theoremstyle{definition}
\newtheorem{definition}[theorem]{Definition}
\newtheorem{example}[theorem]{Example}
\newtheorem{remark}[theorem]{Remark}
\let\c@equation\c@theorem  
\numberwithin{equation}{section}
\subjclass[2020]{17C60, 18M05, 18C15, 18C20}
\keywords{division algebra, monoidal category, monad}
\begin{document}

\begin{abstract}
This work adapts the equivalent definitions of division algebras over a field into multiple types of division algebras in a monoidal category. Examples and consequences of these definitions are then established in various monoidal settings. 
\end{abstract}

\maketitle

\setcounter{tocdepth}{3}



\section{Introduction}\label{sec:intro}

The goal of this work is to introduce the theory of division algebras over a field to  the monoidal setting.
Before continuing, note that linear structures here are over an algebraically closed field $\Bbbk$ of characteristic zero. Categories $\cC$ are assumed to be locally small, unless stated otherwise.
We begin by recalling the definition of division algebras over $\Bbbk$.

\begin{definition}\label{def:DA}
    Let $A$ be a non-zero, associative, unital $\Bbbk$-algebra. We say that $A$ is a \textit{division algebra over $\Bbbk$} if it satisfies any, and hence all, of the following equivalent conditions.
    \begin{enumerate}\renewcommand{\labelenumi}{(\roman{enumi})}
        \item Every non-zero element of $A$ is left (or right) invertible.
        \smallskip
        \item Every left (or right) $A$-module is free.
        \smallskip
        \item The regular left (or right) $A$-module is a simple module.
    \end{enumerate}
\end{definition}

Prior work that adapts this definition to the monoidal setting includes the following. Motivated by the Artin-Wedderburn Theorem, \cite{KongZheng} introduced division algebras in a multifusion category by adapting the right version of Definition~\ref{def:DA}(iii). Division algebras in fusion categories were also introduced in \cite{Grossman} and \cite{GrossmanSnyder} as tools for studying Morita equivalence classes and autoequivalences; there, the left version of Definition~\ref{def:DA}(iii) is adapted. 
Here, we generalize  Definition~\ref{def:DA}(ii,iii) for the monoidal setting, and also create a monad-theoretic description of division algebras.

\begin{definition}[Definitions~\ref{def:SimpDA}, \ref{def:EssDA}, \ref{def:monadic}] \label{def:DAmonoidal} Take $\cC$ to be an abelian monoidal category.
\begin{enumerate}\renewcommand{\labelenumi}{(\roman{enumi})}
    \item A non-zero algebra in $\cC$ is called a \textit{right (left) monadic division algebra} if its associated ``tensor on the right (left) monad'' has equivalent Kleisli and Eilenberg-Moore categories.
    \smallskip
    \item A non-zero algebra in $\cC$ is called a \textit{right (left) essential division algebra} if the right (left) free-module functor is essentially surjective.
    \smallskip
    \item A non-zero algebra is called a \textit{right (left) simplistic division algebra} if the right (left) regular module is simple.
\end{enumerate}
\end{definition}

For example, when $\cC$ is the monoidal category of $\Bbbk$-vector spaces, part~(ii) (resp., part~(iii)) of the definition above recovers part~(ii) (resp., part~(iii)) of Definition~\ref{def:DA}.
Our main result explores how these new definitions relate in a variety of monoidal settings.

\begin{theorem}[Props.~\ref{Prop:ess to simp}, \ref{prop:LiffR}, \ref{Prop:MonEss}]\label{THM} Let $A$ be a non-zero algebra in an abelian monoidal category~$\cC$. 
    \begin{enumerate}\renewcommand{\labelenumi}{\textnormal{(\roman{enumi})}}
        \item $A$ is a monadic division algebra in $\cC$ precisely when it is an essential division algebra in $\cC$.
        \smallskip
        \item Suppose that $\cC$ is rigid with simple unit, and $A$ has a simple module in $\cC$. If $A$ is an essential division algebra, then $A$ is a simplistic division algebra.
         \smallskip
        \item When $\cC$ is a pivotal multifusion category, then each left version of a  division algebra in $\cC$ is equivalent to its right version in $\cC$.
    \end{enumerate}
\end{theorem}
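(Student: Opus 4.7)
The plan is to address parts (i)--(iii) in turn.

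For part (i), I plan to unpack the monad-theoretic definitions directly. The Eilenberg--Moore category of the monad $T = -\otimes A$ on $\cC$ is canonically the category $A\text{-mod}_\cC$ of right $A$-modules in $\cC$, and the standard comparison functor from the Kleisli category of $T$ into this Eilenberg--Moore category is always fully faithful with essential image the free modules $X\otimes A$. Hence the Kleisli and Eilenberg--Moore categories are equivalent precisely when every right $A$-module is isomorphic to some $X\otimes A$, which is exactly the essential-division-algebra condition. The left versions are symmetric.

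For part (ii), I will use a simple $A$-module $S$ to deduce simplicity of the right regular module. By essential surjectivity, write $S \cong T\otimes A$ for some non-zero $T \in \cC$. Rigidity supplies an evaluation $\mathrm{ev}_T\colon T^*\otimes T \to \one$ which cannot be zero --- else the triangle identity would force $\id_T = 0$, contradicting $T \neq 0$ --- and since $\one$ is simple, $\mathrm{ev}_T$ is surjective. Tensoring on the right with $A$ (exact in a rigid category) produces a surjective right $A$-module map $T^*\otimes S = T^*\otimes T\otimes A \twoheadrightarrow A$. To conclude that $A$ is simple as a right $A$-module, I plan a contradiction argument: a non-zero proper sub-$A$-module $N \subsetneq A$ would pull back along this surjection to an $A$-submodule $N' \subseteq T^*\otimes S$ lying strictly between $\ker(\mathrm{ev}_T \otimes \id_A)$ and $T^*\otimes S$. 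Essential surjectivity then writes $N' \cong X \otimes A$, and the hom-adjunction $\Hom_{A\text{-mod}}(X \otimes A, M) \cong \Hom_\cC(X, M)$ together with simplicity of $S$ (giving $\End_{A\text{-mod}}(S) = \Bbbk$ and forcing non-zero maps into $S$ to be surjective) should restrict $X$ enough to force $N' = 0$ or $N' = T^*\otimes S$, yielding the contradiction. I expect making this sub-module analysis rigorous, using only rigidity and simplicity of $\one$, to be the main obstacle.

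For part (iii), by part (i) it suffices to show that in a pivotal multifusion $\cC$, the right and left versions of the essential and simplistic notions coincide. My plan is to exploit the pivotal duality to construct a canonical equivalence from the category of right $A$-modules to the category of left $A$-modules in $\cC$ that sends free modules to free modules and the right regular module to the left regular module. In pivotal multifusion, the dualization $(-)^*$ is an anti-monoidal self-equivalence with a natural iso $A^{**} \cong A$; composing with multiplication-reversal produces from $A$ a canonically isomorphic algebra whose right modules translate to left $A$-modules. Once this dictionary is in place, both essential surjectivity of the free functor and simplicity of the regular module carry across. The main subtlety will be checking that the pivotal-multifusion structure delivers this compatible dictionary at the algebra level.
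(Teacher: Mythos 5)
Part (i) of your proposal is correct and is exactly the paper's argument: identify $\cC^{(-\otimes A)}$ with $\Mod\text{-}A(\cC)$ and the Kleisli category with the free modules, so adjunction-triviality is essential surjectivity of $(-\otimes A)$.

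Part (ii) has a genuine gap, and it is precisely the step you flag as ``the main obstacle.'' Producing the surjection $T^*\otimes S \twoheadrightarrow A$ and then locating a hypothetical proper submodule $N'$ strictly between $\ker(\mathrm{ev}_T\otimes\id_A)$ and $T^*\otimes S$ yields no contradiction by itself: $T^*\otimes S$ is in general far from simple and admits many intermediate submodules, so something must force $N'$ to be trivial, and the sketch does not supply it. Moreover, the appeal to $\End_{A\text{-mod}}(S)=\Bbbk$ is not available in the stated generality --- Proposition~\ref{Prop:ess to simp} assumes only that $\cC$ is rigid abelian monoidal with simple unit, not $\Bbbk$-linear with finite-dimensional Hom spaces, so Schur's lemma in that form cannot be invoked. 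The paper's argument runs in the opposite direction and avoids all of this: given a non-zero proper submodule $\iota\colon N\hookrightarrow A$ of the regular module, tensor the \emph{inclusion} with $T$ to land inside the simple module, $\id_T\otimes\iota\colon T\otimes N\hookrightarrow T\otimes A\cong S$ (a mono by exactness of $T\otimes -$, from rigidity). Simplicity of $S$ forces $T\otimes N=\mathsf{0}$ or $\id_T\otimes\iota$ an isomorphism; since $\one$ is simple and $T\neq\mathsf{0}$, the functor $(T\otimes -)\colon\cC\xrightarrow{\sim}(T\otimes T^*)\text{-}\Mod(\cC)$ is an equivalence (Morita equivalence of $\one$ and $T\otimes T^*$), so it neither kills the non-zero object $N$ nor fails to reflect isomorphisms; hence $\iota$ is an isomorphism. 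I recommend replacing your surjection-based contradiction with this mono-based argument.

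Part (iii) is the right idea in spirit but leaves the crux unexamined. The duality equivalence $(-)^*\colon A\text{-}\Mod(\cC)\to\Mod\text{-}A(\cC)$ sends the regular left module to $A^*$, \emph{not} to the regular right module, and sends $A\otimes Y$ to $Y^*\otimes A^*$, which is not visibly free over $A$; so the ``dictionary'' does not directly carry simplicity of the left regular module to simplicity of the right regular module. The paper closes this gap by a different mechanism: it shows $\underline{\End}_{\Mod\text{-}A(\cC)}(A^*)$ is a right simplistic (resp.\ essential) division algebra using Ostrik's Theorem (Propositions~\ref{Prop:simpMF}, \ref{Prop:essMF}), and then identifies this internal End with $A$ itself \emph{as an algebra} via the chain $\underline{\End}_{\Mod\text{-}A(\cC)}(A^*)\cong\underline{\End}_{A\text{-}\Mod(\cC)}(\rdual{(A^*)})\cong\underline{\End}_{A\text{-}\Mod(\cC)}(A)\cong A$, where the first isomorphism requires proving that the pivotal structure $\hat\jmath\colon(-)^*\natisom\rdual{(-)}$ is an algebra map on internal Ends (Lemma~\ref{lemma:pivalg}) and the last is Lemma~\ref{lem:A-End(A)}. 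That algebra-level identification is exactly the ``subtlety'' you defer, and it is the substantive content of the proof; without it the argument for (iii) is incomplete.
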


Note that the hypothesis on $A$ in part (ii) holds in many settings including in semisimple categories [Remark~\ref{rem:hypsimple}]. Also, the condition that $\cC$ is abelian is only necessary when working with simplistic division algebras. Results on essential and monadic division algebras can be given in not-necessarily-abelian monoidal categories by replacing the non-zero condition for algebras with the condition that the algebra admits more than one isoclass of modules in $\cC$; see Lemma~\ref{lem:zeromod}. 

\smallskip

Throughout the paper, we also supply several examples for the types of division algebras in Definition~\ref{def:DAmonoidal}, especially to show how the types differ in various monoidal categories. 

\smallskip

\begin{itemize}[\footnotesize{$\bullet$}]
\item We provide a sufficient condition for a monad $T$ on a monoidal category $(\cC, \otimes, \one)$ to ensure that $T(\one)$ is a monadic division algebra in $\cC$ [Proposition~\ref{Prop:adjtriv to Ess}]. Using this, we produce a monadic division algebra in the non-abelian monoidal category $\mathsf{Set}$  [Example~\ref{ex4.2}].

 \smallskip
 
 \item For certain semisimple, rigid, abelian monoidal categories with simple unit, we show that simplistic $\not \Rightarrow$ essential [Examples~\ref{ex:FIB},~\ref{ex:grouptype}]; cf. Theorem~\ref{THM}(ii).

\smallskip

\item For a monoidal category $\cC$  with non-simple unit, we show that the unit is an essential division algebra in $\cC$ that is not a simplistic division algebra in $\cC$ [Example~\ref{ex:one}]; cf. Theorem~\ref{THM}(ii).

\smallskip
 
 \item In rigid categories $\cC$, take the algebras  $X \otimes X^*$ and ${}^* X \otimes X$, for $X \in \cC$, with structure morphisms given by (co)evaluation morphisms. These are simplistic division algebras in $\cC$ precisely when $X$ is a simple object in $\cC$, and are essential division algebras in $\cC$ precisely when  $X$ is a one-sided invertible object in $\cC$ [Proposition~\ref{prop:IntEndDA}]. 
 \end{itemize}

\smallskip

Regarding the last item, one-sided invertibility implies simplicity under certain conditions on $\cC$ [Lemma~\ref{lem:invsimple}], so this item illustrates Theorem~\ref{THM}(ii) (see Remark~\ref{rem:XX*}). The algebras in the last item are also examples of internal End algebras, and the result there holds in settings where Ostrik's Theorem [Theorem~\ref{Thm:Ostrik}] is valid (e.g., in multifusion  categories); see Lemma~\ref{lem:invertible}, Propositions~\ref{Prop:simpMF},~\ref{Prop:essMF}. This yields more examples of simplistic and essential division algebras in  monoidal categories.


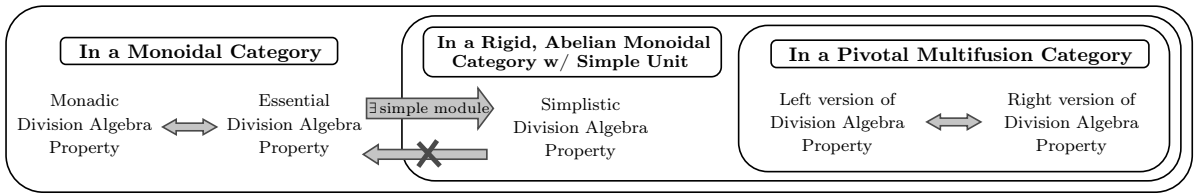
\begin{figure}[h!]
\scalebox{0.82}{
\tikzset{every picture/.style={line width=0.75pt}} 
\begin{tikzpicture}[x=0.75pt,y=0.75pt,yscale=-1,xscale=1]

\draw  [fill={rgb, 255:red, 255; green, 255; blue, 255 }  ][line width=0.75]  (34.5,40.08) .. controls (34.5,27.43) and (44.76,17.18) .. (57.4,17.18) -- (741.19,17.18) .. controls (753.84,17.18) and (764.09,27.43) .. (764.09,40.08) -- (764.09,108.78) .. controls (764.09,121.43) and (753.84,131.68) .. (741.19,131.68) -- (57.4,131.68) .. controls (44.76,131.68) and (34.5,121.43) .. (34.5,108.78) -- cycle ;
\draw  [fill={rgb, 255:red, 255; green, 255; blue, 255 }  ][line width=0.75]  (278,43.48) .. controls (278,32.27) and (287.09,23.18) .. (298.3,23.18) -- (736.79,23.18) .. controls (748,23.18) and (757.09,32.27) .. (757.09,43.48) -- (757.09,104.38) .. controls (757.09,115.59) and (748,124.68) .. (736.79,124.68) -- (298.3,124.68) .. controls (287.09,124.68) and (278,115.59) .. (278,104.38) -- cycle ;
\draw  [fill={rgb, 255:red, 255; green, 255; blue, 255 } ][line width=0.75]  (485.09,47.02) .. controls (485.09,37.07) and (493.15,29.02) .. (503.09,29.02) -- (732.09,29.02) .. controls (742.03,29.02) and (750.09,37.07) .. (750.09,47.02) -- (750.09,101.02) .. controls (750.09,110.96) and (742.03,119.02) .. (732.09,119.02) -- (503.09,119.02) .. controls (493.15,119.02) and (485.09,110.96) .. (485.09,101.02) -- cycle ;
\draw  [color={rgb, 255:red, 74; green, 74; blue, 74 }  ,draw opacity=1 ][fill={rgb, 255:red, 197; green, 197; blue, 197 }  ,fill opacity=1 ][line width=0.75]  (130.84,91.53) -- (139.14,86.84) -- (139.14,89.19) -- (155.75,89.19) -- (155.75,86.84) -- (164.05,91.53) -- (155.75,96.23) -- (155.75,93.88) -- (139.14,93.88) -- (139.14,96.23) -- cycle ;
\draw   (68.01,39.8) .. controls (68.01,37.98) and (69.49,36.5) .. (71.31,36.5) -- (235.11,36.5) .. controls (236.93,36.5) and (238.41,37.98) .. (238.41,39.8) -- (238.41,49.71) .. controls (238.41,51.53) and (236.93,53.01) .. (235.11,53.01) -- (71.31,53.01) .. controls (69.49,53.01) and (68.01,51.53) .. (68.01,49.71) -- cycle ;
\draw   (291.21,36.22) .. controls (291.21,32.85) and (293.94,30.12) .. (297.31,30.12) -- (468.31,30.12) .. controls (471.68,30.12) and (474.41,32.85) .. (474.41,36.22) -- (474.41,54.52) .. controls (474.41,57.88) and (471.68,60.61) .. (468.31,60.61) -- (297.31,60.61) .. controls (293.94,60.61) and (291.21,57.88) .. (291.21,54.52) -- cycle ;
\draw  [line width=0.75]  (505.07,41.48) .. controls (505.07,39.62) and (506.58,38.11) .. (508.44,38.11) -- (734.04,38.11) .. controls (735.9,38.11) and (737.41,39.62) .. (737.41,41.48) -- (737.41,51.58) .. controls (737.41,53.44) and (735.9,54.95) .. (734.04,54.95) -- (508.44,54.95) .. controls (506.58,54.95) and (505.07,53.44) .. (505.07,51.58) -- cycle ;
\draw  [color={rgb, 255:red, 74; green, 74; blue, 74 }  ,draw opacity=1 ][fill={rgb, 255:red, 197; green, 197; blue, 197 }  ,fill opacity=1 ][line width=0.75]  (600.84,88.43) -- (609.14,83.74) -- (609.14,86.09) -- (625.75,86.09) -- (625.75,83.74) -- (634.05,88.43) -- (625.75,93.13) -- (625.75,90.78) -- (609.14,90.78) -- (609.14,93.13) -- cycle ;
\draw  [color={rgb, 255:red, 74; green, 74; blue, 74 }  ,draw opacity=1 ][fill={rgb, 255:red, 197; green, 197; blue, 197 }  ,fill opacity=1 ][line width=0.75]  (256.23,73.9) -- (324.82,73.9) -- (324.82,67.68) -- (334.23,80.12) -- (324.82,92.56) -- (324.82,86.34) -- (256.23,86.34) -- cycle ;
\draw  [color={rgb, 255:red, 74; green, 74; blue, 74 }  ,draw opacity=1 ][fill={rgb, 255:red, 197; green, 197; blue, 197 }  ,fill opacity=1 ][line width=0.75]  (330.23,111.37) -- (263.4,111.37) -- (263.4,114.35) -- (254.23,108.4) -- (263.4,102.45) -- (263.4,105.42) -- (330.23,105.42) -- cycle ;
\draw [color={rgb, 255:red, 74; green, 74; blue, 74 }  ,draw opacity=1 ][line width=2.25]    (299.62,99.91) -- (288.07,115.88) ;
\draw [color={rgb, 255:red, 74; green, 74; blue, 74 }  ,draw opacity=1 ][line width=2.25]    (301.42,115.55) -- (288.07,101.25) ;

\draw (78,39) node [anchor=north west][inner sep=0.75pt]  [font=\small] [align=left] {\footnotesize \textbf{In a Monoidal Category}};
\draw (280,34) node [anchor=north west][inner sep=0.75pt]  [font=\scriptsize] [align=left] {\begin{minipage}[lt]{150pt}\setlength\topsep{0pt}
\begin{center}
{\scriptsize \textbf{ In a Rigid, Abelian Monoidal }}\\[-.2pc]
{\scriptsize \textbf{Category w/ Simple Unit}}
\end{center}
\end{minipage}};
\draw (507,41) node [anchor=north west][inner sep=0.75pt]  [font=\small] [align=left] {\begin{minipage}[lt]{170pt}\setlength\topsep{0pt}
\begin{center}
{\footnotesize \textbf{In a Pivotal Multifusion Category}}
\end{center}
\end{minipage}};
\draw (28,70) node [anchor=north west][inner sep=0.75pt]   [align=left] {\begin{minipage}[lt]{80pt}\setlength\topsep{0pt}
\begin{center}
{\scriptsize Monadic}\\[-.3pc]{\scriptsize Division Algebra}\\[-.3pc]{\scriptsize Property}
\end{center}
\end{minipage}};
\draw (157,70) node [anchor=north west][inner sep=0.75pt]   [align=left] {\begin{minipage}[lt]{80pt}\setlength\topsep{0pt}
\begin{center}
{\scriptsize Essential}\\[-.3pc]{\scriptsize Division Algebra}\\[-.3pc]{\scriptsize Property}
\end{center}
\end{minipage}};
\draw (333,72) node [anchor=north west][inner sep=0.75pt]   [align=left] {\begin{minipage}[lt]{80pt}\setlength\topsep{0pt}
\begin{center}
{\scriptsize Simplistic}\\[-.3pc]{\scriptsize Division Algebra}\\[-.3pc]{\scriptsize Property}
\end{center}
\end{minipage}};
\draw (491,69) node [anchor=north west][inner sep=0.75pt]   [align=left] {\begin{minipage}[lt]{80pt}\setlength\topsep{0pt}
\begin{center}
{\scriptsize Left version of}\\[-.3pc]{\scriptsize Division Algebra}\\[-.3pc]{\scriptsize Property}
\end{center}
\end{minipage}};
\draw (635,69) node [anchor=north west][inner sep=0.75pt]   [align=left] {\begin{minipage}[lt]{80pt}\setlength\topsep{0pt}
\begin{center}
{\scriptsize Right version of}\\[-.3pc]{\scriptsize Division Algebra}\\[-.3pc]{\scriptsize Property}
\end{center}
\end{minipage}};
\draw (253,75) node [anchor=north west][inner sep=0.75pt]  [font=\tiny] [align=left] {\begin{minipage}[lt]{60pt}\setlength\topsep{0pt}
\begin{center}
$\displaystyle \exists \hspace{-.02in}$  simple module
\end{center}
\end{minipage}};
\end{tikzpicture}
}

    \vspace{-.1in}
    
    \caption{Summary of connections between  division algebra properties.}
    \label{fig:main}
\end{figure}

\vspace{-.09in}

\subsection*{Organization of article} In $\S$\ref{sec:background}, we provide background material on monoidal categories and algebraic objects within. In $\S$\ref{sec:defDA}, we introduce simplistic and essential division algebras  in the abelian monoidal setting,  explore examples and consequences in  rigid, multifusion, and pivotal multifusion categories, and establish Theorem~\ref{THM}(ii,iii). We then move to the monadic setting in~$\S$\ref{sec:monads}, where we provide background material, define monadic division algebras, and establish Theorem~\ref{THM}(i). Several questions and conjectures are discussed in $\S$\ref{sec:directions}.


\section{Background material}\label{sec:background}
Here, we recall monoidal categories and algebraic objects within. Monoidal categories are discussed in $\S$\ref{BG:moncat}, rigidity and other properties are discussed in $\S\S$\ref{subsec:rigpiv}, \ref{BG:types}. We discuss invertible objects in monoidal categories in $\S$\ref{ss:inv}, construct algebraic objects in monoidal categories in $\S$\ref{BG:algobjs}, and introduce module categories over a monoidal category in $\S$\ref{BG:modcats}. Morita equivalence in the monoidal context is reviewed in $\S$\ref{ss:Morita}. We refer the reader to \cite{EGNO, Walton} for more details.

\smallskip

Sometimes, we impose that categories are  abelian. In this case, the zero object is denoted by $\mathsf{0}$,  the biproduct is denoted by $\sqcup$, and  an object is {\it simple} if its only subobjects are itself and $\mathsf{0}$.

\subsection{Monoidal categories}\label{BG:moncat} 

A \textit{monoidal category} consists of a category $\cC$ equipped with a bifunctor $\otimes:\cC\times\cC\to\cC$, a unit object $\one\in\cC$, and associator and unitality natural isomorphisms satisfying the pentagon and triangle axioms.
By MacLane's strictness theorem, we assume throughout that all  monoidal categories are \textit{strict}, meaning that the associator and unitality natural isomorphisms are all equal to the identity natural isomorphism on $\cC$.

\smallskip

\begin{center}
For all that follows, let $\cC:=(\cC,\otimes,\one)$ denote a monoidal category. 
\end{center}

\subsection{Rigidity and pivotality}\label{subsec:rigpiv}

We say that  $\cC$ is {\it rigid} if each $X \in \cC$ has a {\it left dual} $X^* \in \cC$ with (co)evaluation maps $\text{ev}_X^L:  X^* \otimes X \to \one$ and $\text{coev}_X^L:  \one \to  X \otimes X^*$, and a {\it right dual} ${}^*X \in \cC$ with (co)evaluation maps $\text{ev}_X^R:  X \otimes {}^*X \to \one$, $\text{coev}_X^R: \one \to  {}^*X \otimes X$,  satisfying  coherence conditions. 
Here, $\rdual{(X^*)}\cong X\cong (\rdual X)^*$ in $\cC$.

\smallskip

For a morphism $f: X \to Y$ in a rigid category $\cC$, the left dual morphism $f^*:Y^*\to X^*$ and  right dual morphism $\rdual f:\rdual Y\to\rdual X$ exist. This gives that $(-)^*$ and $\rdual{(-)}$ are contravariant (strong monoidal) autoequivalences of $\cC$, called the \textit{left} and \textit{right duality functors}, respectively. 

\smallskip

A rigid category is called \textit{pivotal} if there exists a monoidal natural isomorphism $j:\Id_\cC\natisom(-)^{**}$, which exists if and only if there exists a monoidal natural isomorphism $\hat\jmath:(-)^*\natisom\rdual{(-)}$.

\subsection{Types of monoidal categories} \label{BG:types}

We say that $\cC$ is {\it abelian monoidal} if its underlying category is abelian  and the endofunctors $(X \otimes -)$ and $(- \otimes X)$ of $\cC$ are additive, for each $X \in \cC$. 

\smallskip

Recall that $\Bbbk$ denotes an algebraically closed field of characteristic~0. 
A category is \textit{$\Bbbk$-linear} if all Hom-sets are  $\Bbbk$-vector spaces, where composition distributes over addition and scalar multiplication. We say that $\cC$ is ($\Bbbk$-){\it linear monoidal} if the underlying category is $\Bbbk$-linear, and the endofunctors $(X \otimes -)$ and  $(- \otimes X)$ of $\cC$ are linear, for each $X \in \cC$. 

\smallskip

A $\Bbbk$-linear abelian category $\cC$ is \textit{locally finite} if every object has finite length and each Hom-space is finite dimensional. It is \textit{finite} if it is locally finite, has enough projectives, and has only finitely many isoclasses of simple objects. 
We say that $\cC$ is \textit{fusion} if it is abelian, $\Bbbk$-linear monoidal, finite, rigid, semisimple, and satisfies $\End_\cC(\one)\cong \Bbbk$. When  the last condition is omitted,  $\cC$ is  \textit{multifusion}.

\subsection{Invertible objects in monoidal categories} \label{ss:inv} There are a few notions of invertible objects in monoidal categories. Here, $X \in \cC$ is {\it left invertible} if there exists  $X^L \in \cC$ such that $X^L \otimes X \cong \one$, and is {\it right invertible} if there exists  $X^R \in \cC$ such that $X \otimes X^R \cong \one$. We also say that $X$ is {\it invertible} if it is both left and right invertible; here, $X^L \cong X^R$.
The result below is straightforward to verify.

\pagebreak 

\begin{lemma} We have the following statements about (one-sided) invertible objects in $\cC$.
 \begin{enumerate}\renewcommand{\labelenumi}{\textnormal{(\roman{enumi})}} \label{lem:invertible}
\item An object $X \in \cC$ is left invertible if and only if  $(- \otimes X): \cC \to \cC$ is essentially~surjective. 
\smallskip
\item An object $X \in \cC$ is right invertible if and only if $(X \otimes -): \cC \to \cC$ is essentially~surjective. 
\smallskip
\item If $X \in \cC$ is invertible,  then the functors $(- \otimes X), (X \otimes -): \cC \to \cC$ are equivalences. \qed
\end{enumerate}
\end{lemma}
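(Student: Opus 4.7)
The plan is to prove each of the three parts directly from the definitions; strictness of $\cC$ makes every tensor manipulation below automatic, so the argument reduces to unitality plus a single cancellation.

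For part (i), I would first suppose $X$ is left invertible with $X^L \otimes X \cong \one$. Then for any $Y \in \cC$, the object $Y \otimes X^L$ satisfies $(Y \otimes X^L) \otimes X \cong Y \otimes \one \cong Y$, witnessing $Y$ in the essential image of $(- \otimes X)$. Conversely, if $(- \otimes X)$ is essentially surjective, applying essential surjectivity to $Y = \one$ produces some $Z \in \cC$ with $Z \otimes X \cong \one$, so $X^L := Z$ serves as a left inverse. Part (ii) is the mirror statement and follows by the identical argument with the two sides of $\otimes$ swapped.

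For part (iii), I would first observe that whenever $X$ has both a left inverse $X^L$ and a right inverse $X^R$, the two must be isomorphic via
\[ X^L \;\cong\; X^L \otimes \one \;\cong\; X^L \otimes X \otimes X^R \;\cong\; \one \otimes X^R \;\cong\; X^R. \]
Writing $X^{-1}$ for this common object, I would then verify that $(- \otimes X^{-1})$ is a quasi-inverse to $(- \otimes X)$: the two composites compute as $- \otimes (X \otimes X^{-1}) \cong - \otimes \one \cong \Id_\cC$ and $- \otimes (X^{-1} \otimes X) \cong - \otimes \one \cong \Id_\cC$, which are natural in their input by functoriality of $\otimes$. Symmetrically, $(X \otimes -)$ has quasi-inverse $(X^{-1} \otimes -)$.

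I do not foresee any genuine obstacle: the argument is essentially a bookkeeping exercise in unitality. The only point worth flagging is the identification $X^L \cong X^R$ for two-sided invertible $X$, which is exactly what is needed to upgrade the essential-surjectivity statements of (i) and (ii) into honest equivalences in (iii).
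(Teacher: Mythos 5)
Your proof is correct and is exactly the routine verification the paper has in mind (the lemma is stated with a \qed and the remark that it is ``straightforward to verify,'' so no proof is given there). Both directions of (i)--(ii) and the identification $X^L \cong X^R$ feeding into (iii) are handled properly, with naturality of the composite isomorphisms following from functoriality of $\otimes$ as you note.
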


In a rigid category $\cC$, a stronger notion of invertibility for $X \in \cC$  is to require that it is invertible  via (co)evaluation morphisms \cite[$\S$2.11]{EGNO}. 
Next, we connect invertibility here with simplicity. 

\begin{lemma} \label{lem:invsimple} Take an abelian monoidal category $\cC$ with simple $\one$, and where all objects have finite length. If $X \in \cC$ is left or right invertible, then $X$ is simple. 
\end{lemma}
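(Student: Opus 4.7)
The plan is to reduce to the left-invertible case (the right-invertible case is symmetric, via the reversed monoidal structure on $\cC$), so assume $X^L\otimes X\cong\one$ for some $X^L\in\cC$. I would proceed by induction on the finite length $\ell(X)$. The base case $\ell(X)=1$ is tautological. For the inductive step, suppose $\ell(X)\geq 2$ and pick a nontrivial short exact sequence $0\to Y\to X\to Z\to 0$, so that both $Y$ and $Z$ are nonzero.

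The key move is to apply $(X^L\otimes-)$ to this sequence. Using the biexactness of the tensor product that is implicit in the settings of the paper, this yields an exact sequence
\[
0\to X^L\otimes Y\to X^L\otimes X\cong\one\to X^L\otimes Z\to 0,
\]
whose middle term is simple. Simplicity of $\one$ forces exactly one of $X^L\otimes Y$ and $X^L\otimes Z$ to vanish while the other is isomorphic to $\one$. Up to relabeling, assume $X^L\otimes Z\cong\one$; then $Z$ is left invertible with the same inverse $X^L$, so by the inductive hypothesis $Z$ is simple. It remains to derive a contradiction from $X^L\otimes Y=0$ together with $Y\neq 0$.

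This last step is the main obstacle. My plan to carry it out rests on two functorial identities flowing from $X^L\otimes X\cong\one$: strict associativity gives $(X^L\otimes-)\circ(X\otimes-)=\id_\cC$ and $(-\otimes X)\circ(-\otimes X^L)=\id_\cC$, so $(X\otimes-)$ and $(-\otimes X^L)$ are faithful sections, while $(-\otimes X)$ and $(X^L\otimes-)$ are essentially surjective retractions (compatibly with Lemma~\ref{lem:invertible}(i)). A faithful additive endofunctor reflects zero objects, so these two faithful functors provide the cancellation tools. Passing to a nonzero simple subobject $T\hookrightarrow Y$, left exactness gives $X^L\otimes T=0$; then essential surjectivity of $(-\otimes X)$ lets me write $T\cong W\otimes X$ for some nonzero $W$, so that $(X^L\otimes W)\otimes X=0$.

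The hard part will be executing the final cancellation rigorously. The asymmetric placement of $X^L$ on the left and $X$ on the right prevents a one-shot reflection in the non-braided setting, so I would combine the two faithful sections in two passes—cancelling the rightmost $X$ via $(-\otimes X^L)$ and the leftmost $X^L$ via $(X\otimes-)$—with the finite-length hypothesis ensuring the procedure terminates. Once $W=0$ is extracted, the contradiction with $T\neq 0$ closes the induction and yields simplicity of $X$.
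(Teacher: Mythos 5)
There is a genuine gap, and it sits exactly where you flag ``the hard part'': the whole content of the lemma is the claim that $X^L\otimes Y=0$ forces $Y=0$, and the two-pass cancellation you sketch cannot deliver it. The only faithful functors your identities provide are $(X\otimes-)$ and $(-\otimes X^L)$ (the sections), and applying either of them to $X^L\otimes W\otimes X=0$ yields $X\otimes X^L\otimes W\otimes X=0$ or $X^L\otimes W\otimes X\otimes X^L=0$; in both cases the only newly adjacent pair is $X\otimes X^L$, which is \emph{not} known to be $\one$ --- knowing that would be two-sided invertibility, precisely what is not assumed. No finite iteration of these two moves ever brings $X^L\otimes X$ into an adjacent position where it can be cancelled, so the assertion that the finite-length hypothesis makes the procedure ``terminate'' with $W=0$ is unsupported: nothing in the sketch decreases a length or changes the mismatched bracketing. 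What you actually need is the statement that tensoring a nonzero object with a nonzero object (here, with $X^L$ on the left) cannot give $\mathsf{0}$ when $\one$ is simple; that nonvanishing is the nontrivial input and is not a formal consequence of the one-sided isomorphism $X^L\otimes X\cong\one$. (Note also that even granting it, your induction is superfluous: if both $X^L\otimes Y$ and $X^L\otimes Z$ are nonzero, the exact sequence with simple middle term $\one$ already gives the contradiction directly.)

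For comparison, the paper's proof is a one-liner through the inequality $\text{length}(X\otimes Y)\geq\text{length}(X)\,\text{length}(Y)$ for finite-length objects \cite[Exercise~4.3.11(1,2)]{EGNO}: from $X^L\otimes X\cong\one$ one gets $1\geq\text{length}(X^L)\,\text{length}(X)$, so $\text{length}(X)=1$ and $X$ is simple. That cited inequality packages exactly the nonvanishing/multiplicativity you are missing, so if you wish to keep your set-up you should invoke (or prove) that statement rather than attempt cancellation. Finally, be explicit that you are adding biexactness of $\otimes$ as a hypothesis: the paper's definition of abelian monoidal only requires the tensor endofunctors to be additive, so exactness of $X^L\otimes-$ (needed for your short exact sequence and for preserving the mono $T\hookrightarrow Y$) is an assumption --- one that is indeed available in the rigid settings where the lemma is used, and is likewise built into the EGNO exercise, but it is not ``implicit'' in the stated hypotheses.
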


\begin{proof} This follows since  $\text{length}(X \otimes Y) \geq \text{length}(X) \, \text{length}(Y)$ when $X,Y \in \cC$ have finite length \cite[Exercise~4.3.11(1,2)]{EGNO}, and objects of length 1 are precisely the simple objects of~$\cC$.
\end{proof}

\subsection{Algebraic structures in monoidal categories}\label{BG:algobjs} 

An \textit{algebra in $\cC$} is a tuple $(A,m_A,u_A)$ consisting of an object $A\in\cC$ and morphisms $m_A:A\otimes A\to A$ and $u_A:\one\to A$ in $\cC$ which satisfy associativity and unitality constraints. These objects form a category, $\mathsf{Alg}(\cC)$, where morphisms are those morphisms in $\cC$ that respect the algebra structures. 

\medskip 

For example, $\one \in \mathsf{Alg}(\cC)$, with $m_\one : \one \otimes \one \equivto \one$ (unitor isomorphism), and $u_\one = \id_\one$. In an abelian monoidal category, $\mathsf{0} \in \mathsf{Alg}(\cC)$ with  $m_\mathsf{0} : \mathsf{0} \otimes \mathsf{0} \to \mathsf{0}$  and $u_\mathsf{0} = \one \to \mathsf{0}$ coming from $\mathsf{0}$ being terminal.

\medskip

Fix an algebra $(A,m_A,u_A)$ in $\cC$. A \textit{right $A$-module in $\cC$} is a pair $(M,\actR)$, where $M\in\cC$ and $\actR:M\otimes A\to M$ in $\cC$ satisfying associativity and unitality axioms. These structures form a category $\Mod\text{-}A(\cC)$, where morphisms are those morphisms in $\cC$ that respect the right module structures. \textit{Left $A$-modules $(N,\actL)$ in $\cC$} and the category $A\text{-}\Mod(\cC)$ are defined likewise.

\medskip

If $(B,m_B,u_B)$ is another algebra in $\cC$, an \textit{$(A,B)$-bimodule in $\cC$} is a tuple $(Q,\actL,\actR)$ such that $(Q,\actL)\in A\text{-}\Mod(\cC)$, $(Q,\actR)\in\Mod\text{-}B(\cC)$, satisfying a middle associativity axiom. With morphisms that are simultaneously left and right module morphisms, we obtain the category $(A,B)\text{-}\Bimod(\cC)$.

\medskip

For example, in an abelian monoidal category $\cC$, we have that $\mathsf{0} \in A\text{-}\Mod(\cC)$, with  $\actL : A \otimes \mathsf{0} \to \mathsf{0}$ coming from~$\mathsf{0}$ being terminal.

\medskip 

 A non-zero right module $M\in\Mod\text{-}A(\cC)$ is called  {\it simple} if it is a simple object in the category $\Mod\text{-}A(\cC)$; a similar notion holds for right modules and bimodules in~$\cC$.

\medskip 

The \textit{regular right} (resp., {\it left}) {\it $A$-module in $\cC$} is $(A,m_A)$ in $\Mod\text{-}A(\cC)$ (resp.,  in $A\text{-}\Mod(\cC)$), and \textit{the regular $(A,A)$-bimodule in $\cC$} is $(A,m_A,m_A)$ in $(A,A)\text{-}\Bimod(\cC)$, which is denoted by $A_{\text{reg}}$ or $A$.

\medskip

Moreover, a right $A$-module $(M,\actR)$ in $\cC$ is  said to be \textit{free} if there is an object $X\in\cC$ so that $(M,\actR)\cong (X\otimes A,\id_X\otimes m_A)$ in $\Mod\text{-}A(\cC)$. Similarly, a left $A$-module $(N,\actL)$ in $\cC$ is said to be \textit{free} if there is an object $Y\in\cC$ so that $(N,\actL)\cong (A\otimes Y,m_A\otimes \id_Y)$ in $A\text{-}\Mod(\cC)$. For instance, the regular left and right $A$-modules are the free modules over $A$ on the object $\one\in\cC$.


\medskip 

Now let $(M,\actR)\in\Mod\text{-}A(\cC)$. If $\rdual M$ exists in $\cC$, then $\rdual M$ is a left $A$-module in $\cC$. Similarly, given $(N,\actL)\in A\text{-}\Mod(\cC)$, if $N^*$ exists in $\cC$, then $N^*$ is a right $A$-module in $\cC$.
When $\cC$ is rigid, restricting the duality functors from $\S$\ref{subsec:rigpiv} to categories of modules, we get the equivalences $(-)^*:A\text{-}\Mod(\cC)\xrightarrow{\sim} \Mod\text{-}A(\cC)$ and $\rdual(-):\Mod\text{-}A(\cC)\xrightarrow{\sim} A\text{-}\Mod(\cC)$, for any $A\in\Alg(\cC)$.

\medskip
 
 Given $(M,\actR)\in\Mod\text{-}A(\cC)$ and $(N,\actL)\in A\text{-}\Mod(\cC)$, the \textit{tensor product of $M$ and $N$ over $A$} is the coequalizer of the morphisms $\id_M\otimes \actL$ and $\actR\otimes\id_N$, denoted by $M\otimes_A N$, if it exists in~$\cC$. In any case, for any $Q\in (A,B)\text{-}\Bimod(\cC)$ and $P\in (B,C)\text{-}\Bimod(\cC)$, we get that $Q\otimes_B P$ is in $(A,C)\text{-}\Bimod(\cC)$. Here, $(A,A)\text{-}\Bimod(\cC)$ is monoidal with $\otimes:=\otimes_A$ and $\one:=A_{\text{reg}}$.

\medskip

A \textit{left ideal} of an algebra $A$ in $\cC$ is a subobject of  $A_{\text{reg}} \in A\text{-}\Mod(\cC)$. In other words, it is an object $(I,\lambda)$ with a mono $\iota_I^A:(I,\lambda)\hookrightarrow (A,m_A)$ in $A\text{-}\Mod(\cC)$. Similarly, a \textit{right ideal of $A$} is a subobject of $A_{\text{reg}} \in \Mod\text{-}A(\cC)$, and a \textit{\textnormal{(}two-sided\textnormal{)} ideal of $A$} is a subobject of $A_{\text{reg}} \in (A,A)\text{-}\Bimod(\cC)$.


\medskip

In an abelian monoidal category $\cC$, a non-zero algebra $A$ is \textit{simple} if its only ideals are itself and zero (i.e., $A_{\text{reg}}$ is a simple object in $(A,A)\text{-}\Bimod(\cC)$). 
Next, consider the  preliminary result below.

\begin{lemma} \label{lem:zeromod} In an abelian monoidal category $\cC$,  an algebra $A$ is $\cC$ is the zero algebra $\mathsf{0}$ if and only if $A$-$\mathsf{Mod}(\cC)$ and $\mathsf{Mod}$-$A(\cC)$ each have one object, namely the zero module, up to isomorphism.
\end{lemma}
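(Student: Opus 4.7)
The plan is to prove both implications directly from the module axioms, together with the fact that $\otimes$ is additive in each variable.

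For the forward implication, suppose $A=\mathsf{0}$ and let $(M,\actL)\in A\text{-}\Mod(\cC)$. Since $\cC$ is abelian, $\Hom_\cC(\one,\mathsf{0})=0$, so the unit morphism $u_A\colon\one\to\mathsf{0}$ is the zero morphism. Because the endofunctor $-\otimes M\colon\cC\to\cC$ is additive, it sends zero morphisms to zero morphisms, hence $u_A\otimes\id_M\colon\one\otimes M\to\mathsf{0}\otimes M$ is zero. The left unitality axiom reads $\actL\circ(u_A\otimes\id_M)=\id_M$ after identifying $\one\otimes M$ with $M$, so the left-hand side is zero and $\id_M=0$, forcing $M\cong\mathsf{0}$. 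The symmetric argument, using additivity of $M\otimes-$, handles right $A$-modules.

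For the backward implication, assume that $A\text{-}\Mod(\cC)$ (resp., $\Mod\text{-}A(\cC)$) contains only the zero module up to isomorphism. Since the regular module $A_{\textnormal{reg}}$ lives in this category, we have $A_{\textnormal{reg}}\cong\mathsf{0}$; in particular, the underlying object $A$ is isomorphic to $\mathsf{0}$ in $\cC$. The algebra structure on $\mathsf{0}$ is unique, because both $\mathsf{0}\otimes\mathsf{0}\to\mathsf{0}$ and $\one\to\mathsf{0}$ are the unique morphisms into the terminal object, so $A$ coincides with the zero algebra.

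The only real care point is the bilinearity step $u_A\otimes\id_M=0$, which follows from the hypothesis that $-\otimes M$ is additive; no serious obstacle is expected.
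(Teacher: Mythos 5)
Your proof is correct and follows essentially the same route as the paper: the forward direction uses the unit axiom to force $\id_M=0$ (you just make explicit, via additivity of $-\otimes M$, why $u_A\otimes\id_M$ is zero, which the paper leaves implicit), and your backward direction is the direct form of the paper's contrapositive observation that a non-zero $A$ has non-isomorphic zero and regular modules.
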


\begin{proof}
If $A = \mathsf{0}$ and $(M, \actL) \in \mathsf{0}$-$\mathsf{Mod}(\cC)$, then $\id_M = \actL  \circ (u_{\mathsf{0}} \otimes \id_M)$ is a zero morphism. Hence, $M$ is both initial and terminal, and $M \cong \mathsf{0}$. Similarly, all right modules over $A=\mathsf{0}$ are also zero. 
On the other hand, if $A$ is non-zero, then $A$-$\mathsf{Mod}(\cC)$ (or $\mathsf{Mod}$-$A(\cC)$) contains the zero module and the regular module, and these are not isomorphic.
\end{proof}

\subsection{Module categories} \label{BG:modcats}


A \textit{left $\cC$-module category} consists of a category $\cM$, a left action bifunctor $\actL:\cC\times\cM\to\cM$, and associativity and unitality natural isomorphisms which satisfy the pentagon and triangle axioms. \textit{Right $\cC$-module categories} $(\cN, \actR)$ are defined likewise.

\medskip

The \textit{regular left (resp. right) $\cC$-module category} is given by $\cC$, with action bifunctor $\actL:=\otimes$ (resp. $\actR:=\otimes$). We also have that for any algebra $A\in\Alg(\cC)$, the category $\Mod\text{-}A(\cC)$ is a left $\cC$-module category and $A\text{-}\Mod(\cC)$ is a right $\cC$-module category, again with action bifunctors given by $\otimes$.




\medskip

A left  $\cC$-module category $\cM:=(\cM,\actL)$  is  \textit{closed} if, for each $M\in\cM$ (resp,. $N\in\cN$), the functor $(-\actL M):\cC\to\cM$  has a right adjoint: $\underline{\Hom}_\cM(M,-):\cM\to\cC$. We call $\underline{\Hom}_{\cM}(M,N)$ the \textit{internal Hom of $M$ and $N$}. Also, $\underline{\End}_{\cM}(M):= \underline{\Hom}_{\cM}(M,M)$ is the \textit{internal End of $M$}. Similar notions hold for right $\cC$-module categories.


\medskip

For any $M\in(\cM,\actL)$ and any $N\in (\cN, \actR)$, the objects $\underline{\End}_\cM(M)$ and $\underline{\End}_\cN(N)$ are algebras in~$\cC$. Given $M'\in\cM$ and $N'\in\cN$, we obtain that $\underline{\Hom}_\cM(M,M')$ is a right $\underline{\End}_\cM(M)$-module in $\cC$. 
Similarly, $\underline{\Hom}_\cN(N,N')$ is a 
left $\underline{\End}_{\cN}(N')$-module in $\cC$. 
From this, we obtain the functors $\underline{\Hom}_\cM(M,-):\cM\to \Mod\text{-}\underline{\End}_\cM(M)(\cC)$ and $\underline{\Hom}_\cN(-,N'):\cN\to\underline{\End}_\cN(N')\text{-}\Mod(\cC)$.

\medskip

As an  example, if the category $\cC$ is rigid, the regular left $\cC$-module category is closed, with $\underline{\Hom}_{\cC}(X,Y)\cong Y\otimes X^*$. The algebra and module structures on these internal Homs and Ends are then given by appropriate evaluation and coevaluation maps. 

\medskip

Given an algebra $A\in\cC$, we have that the left $\cC$-module category $\Mod\text{-}A(\cC)$  and the right $\cC$-module category $A\text{-}\Mod(\cC)$ are both closed, with $\underline{\Hom}_{\Mod\text{-}A(\cC)}(M,M')\cong (M\otimes_A\rdual(M'))^*$ and $\underline{\Hom}_{A\text{-}\Mod(\cC)}(N,N')\cong \rdual((N')^*\otimes_A N)$ in $\cC$.   The next result is also useful.

\begin{lemma} \label{lem:A-End(A)}
For $\cC$ rigid with $A \in \mathsf{Alg}(\cC)$, we get $A \cong \underline{\End}_{A\text{-}\Mod(\cC)}(A) \cong \underline{\End}_{\Mod\text{-}A(\cC)}(A)$ in $\mathsf{Alg}(\cC)$.
\end{lemma}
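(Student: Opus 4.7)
The plan is to construct a canonical algebra morphism $\varphi\colon A \to \underline{\End}_{\Mod\text{-}A(\cC)}(A)$ out of the multiplication $m_A$, and then show $\varphi$ is an isomorphism via the explicit formula for internal Homs in $\Mod\text{-}A(\cC)$ supplied just above the lemma. The second isomorphism $A \cong \underline{\End}_{A\text{-}\Mod(\cC)}(A)$ is proved symmetrically.

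For the construction: by associativity of $m_A$, the multiplication $m_A \colon A \otimes A \to A$ is a morphism in $\Mod\text{-}A(\cC)$, where $A \otimes A$ carries the free right $A$-module structure and $A$ carries the regular one. The adjunction $(- \otimes A) \dashv \underline{\Hom}_{\Mod\text{-}A(\cC)}(A,-)$ transports $m_A$ to a morphism $\varphi\colon A \to \underline{\End}_{\Mod\text{-}A(\cC)}(A)$ in $\cC$. Since the algebra structure on any internal End is characterized as the unique one making the counit $\underline{\End}_{\cM}(M) \otimes M \to M$ into an action of the algebra on $M$, the associativity and unitality of $m_A$ force $\varphi$ to be a morphism in $\mathsf{Alg}(\cC)$.

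For invertibility: I would specialize the formula $\underline{\Hom}_{\Mod\text{-}A(\cC)}(M,M') \cong (M \otimes_A \rdual{(M')})^*$ to $M = M' = A$. Since $A$ is the unit of $\otimes_A$, the canonical iso $A \otimes_A \rdual A \cong \rdual A$, combined with rigidity's $(\rdual A)^* \cong A$, yields the chain of canonical isomorphisms $\underline{\End}_{\Mod\text{-}A(\cC)}(A) \cong (A \otimes_A \rdual A)^* \cong (\rdual A)^* \cong A$ in $\cC$. Tracing the Yoneda meaning of $\varphi$ through this chain identifies it with the composite iso, so $\varphi$ is invertible. The dual case $\underline{\End}_{A\text{-}\Mod(\cC)}(A) \cong A$ is handled identically, viewing $m_A$ as a left $A$-module map, applying the adjunction $(A \otimes -) \dashv \underline{\Hom}_{A\text{-}\Mod(\cC)}(A,-)$, and simplifying via $\rdual(A^* \otimes_A A) \cong \rdual(A^*) \cong A$.

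The main obstacle is the bookkeeping in the preceding paragraph, namely verifying that $\varphi$ respects the multiplication and not merely the underlying object. Concretely, composing the free-forgetful adjunction on $\Mod\text{-}A(\cC)$ with the internal-Hom adjunction gives natural isomorphisms $\Hom_\cC(X, A) \cong \Hom_{\Mod\text{-}A(\cC)}(X \otimes A, A) \cong \Hom_\cC(X, \underline{\End}_{\Mod\text{-}A(\cC)}(A))$, and one checks by a Yoneda/diagram chase applied to $X = A \otimes A$ and $X = \one$ that the composition on $\underline{\End}_{\Mod\text{-}A(\cC)}(A)$ transports to $m_A$ on $A$ while its unit transports to $u_A$, confirming the algebra compatibility.
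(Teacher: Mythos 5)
Your proposal is correct, and it reaches the conclusion by a genuinely different route than the paper. You build the comparison map $\varphi\colon A\to\underline{\End}_{\Mod\text{-}A(\cC)}(A)$ abstractly, as the adjunct of $m_A\colon A\otimes A\to A$ under $(-\otimes A)\dashv\underline{\Hom}_{\Mod\text{-}A(\cC)}(A,-)$, check that it respects multiplication and unit by passing to adjuncts (equivalently, by the characterization of the internal-End algebra structure through the evaluation map), and see that it is invertible because $\underline{\Hom}_{\Mod\text{-}A(\cC)}(A,-)$ and the forgetful functor are both right adjoint to the free-module functor $(-\otimes A)$, so the composite $\Hom_\cC(X,A)\cong\Hom_{\Mod\text{-}A(\cC)}(X\otimes A,A)\cong\Hom_\cC(X,\underline{\End}_{\Mod\text{-}A(\cC)}(A))$ is a natural isomorphism induced, via Yoneda, by $\varphi$ itself. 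The paper instead works entirely inside the explicit model: it fixes $\underline{\End}_{A\text{-}\Mod(\cC)}(A)\cong\rdual{(A^*\otimes_A A)}$, writes the multiplication and unit as duals of $\mu=\id_{A^*}\otimes_A\operatorname{coev}_A^L\otimes_A\id_A$ and of $\eta$ with $\operatorname{ev}_A^L=\eta\,\pi$, exhibits the mutually inverse morphisms $\phi,\psi$ realizing $A^*\otimes_A A\cong A^*$, and verifies by direct computation that $(m_A)^*=(\phi\otimes\phi)\,\mu\,\psi$ and $(u_A)^*=\eta\,\psi$, so that ${}^*\psi$ is the desired algebra isomorphism. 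Your approach buys model-independence and shorter bookkeeping (rigidity is only needed so the internal Homs exist; the formula $(A\otimes_A\rdual A)^*\cong A$ you quote is then a consistency check rather than the engine of the proof), while the paper's computation buys an explicit isomorphism expressed in terms of (co)evaluations, in a form compatible with the concrete internal-Hom formulas it uses elsewhere (e.g.\ in the proof of Lemma 3.12). Either argument suffices for the lemma as stated.
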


\begin{proof}
Take the projection $\pi: A^* \otimes A \to  A^* \otimes_A A$ from the coequalizer property. Next, note that $m_{\underline{\End}_{A\text{-}\Mod(\cC)}(A)} = {}^* \mu$ where $\mu:= \id_{A^*} \otimes_A \text{coev}_A^L \, \otimes_A \id_A$, and $u_{\underline{\End}_{A\text{-}\Mod(\cC)}(A)} = {}^* \eta$ such that $\text{ev}_A^L = \eta \, \pi$. Moreover, the isomorphism $A^* \otimes_A A \cong A^*$ in $\cC$ is given by mutually inverse morphisms  $\phi: A^* \otimes_A A \to A^*$ and $\psi: A^* \to A^* \otimes_A A$ in $\cC$, where 
\[
\phi \, \pi = \triangleleft_{A^*} = (\text{ev}_A^L \otimes \id_{A^*})(\id_{A^*} \otimes m_A \otimes \id_{A^*})(\id_{A^*} \otimes \id_A \otimes \text{coev}_A^L) \;\; \;\text{and} \;\;\; \psi = \pi \, (\id_{A^*} \otimes u_A).
\]
Now one can check that $(m_A)^* = (\phi \otimes \phi) \, \mu \, \psi$ and $(u_A)^* = \eta \, \psi$. Thus,  ${}^*\psi$ yields the first algebra isomorphism in $\cC$. Similarly, $A \cong \underline{\End}_{\Mod\text{-}A(\cC)}(A)$ as algebras in $\cC$.
\end{proof}

\subsection{Morita equivalence of algebras.} \label{ss:Morita} 
Given algebras $A$ and $B$ in $\cC$, we say that $A$ and $B$ are \textit{Morita equivalent} in $\cC$ if either of the equivalent conditions holds: $A\text{-}\Mod(\cC) \simeq B\text{-}\Mod(\cC)$  as right $\cC$-module categories, or  $\Mod\text{-}A(\cC) \simeq \Mod\text{-}B(\cC)$  as left $\cC$-module categories.

\smallskip

For instance, let $\cC$ be abelian rigid monoidal  with simple unit. Then, for any non-zero $X\in\cC$, the internal End of $X$,  given by $X\otimes X^*$, is Morita equivalent to $\one$ in $\cC$ via one of the functors below:
 \begin{equation} \label{eq:XX*}
 (-\otimes X^*):\cC\xrightarrow{\sim}\Mod\text{-}(X\otimes X^*)(\cC);\qquad (X\otimes -):\cC\xrightarrow{\sim}(X\otimes X^*)\text{-}\Mod(\cC).
 \end{equation}
Their respective quasi-inverses are given by: 
\[
(- \otimes_{X \otimes X^*} X): \Mod\text{-}(X\otimes X^*)(\cC) \to \cC; \qquad (X^* \otimes_{X \otimes X^*} -): (X\otimes X^*)\text{-}\Mod(\cC) \to \cC.
\]
See, e.g., \cite[Example 4.58]{Walton}. Similarly, the algebra ${}^*X \otimes X$ is also Morita equivalent to $\one$ in $\cC$.

\smallskip

The categories $A\text{-}\Mod(\cC)$ and $\Mod\text{-}A(\cC)$ are the prototypical examples of $\cC$-module categories, and the following theorem from \cite{Ostrik} addresses when any given $\cC$-module category is of this form.

\begin{theorem}[Ostrik's Theorem] \label{Thm:Ostrik} Let $\cC$ be a multifusion category, with $\cM$ a non-zero, indecomposable left $\cC$-module category and $\cN$ a non-zero, indecomposable, right $\cC$-module category. Then, for any non-zero $M\in\cM$ and any non-zero $N' \in\cN$, we have that
\[
\cM\simeq \Mod\text{-}\underline{\End}_\cM(M)(\cC)\qquad\text{and}\qquad \cN\simeq \underline{\End}_{\cN}(N')\text{-}\Mod(\cC),
\]

\noindent as left and right $\cC$-module categories, respectively, via $\underline{\textnormal{Hom}}_\cM(M,-):\cM\xrightarrow{\sim}\Mod\text{-}\underline{\End}_\cM(M)(\cC)$ and $\underline{\Hom}_\cN(-,N'):\cN\xrightarrow{\sim}\underline{\End}_{\cN}(N')\text{-}\Mod(\cC)$.
\qed
\end{theorem}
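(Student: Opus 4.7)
The plan is to exhibit $F := \underline{\Hom}_\cM(M,-): \cM \to \Mod\text{-}A(\cC)$, with $A := \underline{\End}_\cM(M)$, as a $\cC$-module equivalence by constructing an explicit quasi-inverse and verifying that the unit and counit are isomorphisms; the right $\cC$-module statement is handled analogously. First, I would verify that $F$ is a $\cC$-module functor, i.e., that there is a natural isomorphism $F(X \actL N) \cong X \otimes F(N)$ in $\Mod\text{-}A(\cC)$. This follows by iterating the defining adjunction $\Hom_\cM(X \actL M, N) \cong \Hom_\cC(X, \underline{\Hom}_\cM(M,N))$ together with the definition of the left $\cC$-action on $\Mod\text{-}A(\cC)$ by $X \cdot (Y, \actR) = (X \otimes Y, \id_X \otimes \actR)$.

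Next, I would construct a candidate quasi-inverse $G: \Mod\text{-}A(\cC) \to \cM$ by $G(Y, \actR) := Y \otimes_A M$, defined as the coequalizer in $\cM$ of the parallel pair $Y \otimes A \actL M \rightrightarrows Y \actL M$ formed by $\actR \actL \id_M$ on one side and by $\id_Y \actL \mu$ on the other, where $\mu: A \actL M \to M$ is the canonical action coming from the counit of the internal-Hom adjunction at $M$. These coequalizers exist because $\cM$ is a finite abelian $\cC$-module category over the multifusion category $\cC$, and the assignment $(Y, \actR) \mapsto Y \otimes_A M$ is then functorial.

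Finally, I would check that $(G,F)$ is an adjoint equivalence, with unit $\eta_N: N \to GF(N)$ and counit $\varepsilon_Y: FG(Y) \to Y$ both invertible. Since $\cC$ is multifusion, $\cM$ is finite semisimple, so it is enough to verify $\eta$ on a generating class of simple objects of $\cM$ and $\varepsilon$ on free $A$-modules in $\cC$. A direct computation on objects of the form $X \actL M$, using the $\cC$-module functor structure of $F$ and the identification $F(M) = A$, yields
\[
GF(X \actL M) \;\cong\; (X \otimes A) \otimes_A M \;\cong\; X \actL M,
\]
and a parallel check gives $FG(X \otimes A) \cong X \otimes A$ on free modules. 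The main obstacle is then the generation statement: \emph{every} simple object of $\cM$ must appear as a summand of some $X \actL M$ with $X \in \cC$. This is where indecomposability of $\cM$ enters, via the standard argument that the full subcategory of $\cM$ consisting of summands of objects $X \actL M$ is a non-zero $\cC$-stable Serre subcategory, and indecomposability forces it to be all of $\cM$. Rigidity of $\cC$ is used throughout to manipulate the internal Homs via duals.
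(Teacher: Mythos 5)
The paper does not actually prove this statement: it is imported from \cite{Ostrik} (see also \cite[Theorem 7.10.1]{EGNO}) and stated without proof, so your sketch can only be measured against the standard argument --- which is essentially what you have reproduced. The module-functor structure on $\underline{\Hom}_\cM(M,-)$ is exactly the isomorphism $X\otimes\underline{\Hom}_\cM(M,N)\cong\underline{\Hom}_\cM(M,X\actL N)$ that the paper itself invokes in Proposition~\ref{Prop:essMF}, the left adjoint is the relative tensor product $-\otimes_A M$ given by your coequalizer, and the equivalence is verified on generators, with indecomposability supplying the generation statement. This is the right route and it works.

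Two steps need more care than you give them. First, the reduction of the counit to free modules is not automatic: the action map $Y\otimes A\to Y$ is an epimorphism in $\Mod\text{-}A(\cC)$ but its obvious section $\id_Y\otimes u_A$ is \emph{not} a module map, so $Y$ is not visibly a summand of $Y\otimes A$; and you cannot appeal to semisimplicity of $\Mod\text{-}A(\cC)$, since that is a consequence of the theorem rather than an input. The fix is to present $Y$ by free modules, $Y_1\otimes A\to Y_0\otimes A\to Y\to 0$, note that $G=-\otimes_A M$ is right exact (a left adjoint) while $F=\underline{\Hom}_\cM(M,-)$ is exact (a right adjoint out of a semisimple category), and conclude by the five lemma. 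Second, ``non-zero $\cC$-stable Serre subcategory'' is not quite the right closure notion in the semisimple setting: the relevant claim is that the additive subcategory $\cM_1$ generated by summands of the objects $X\actL M$ and its complement $\cM_2$ (generated by the remaining simples) are \emph{both} $\cC$-stable, so that $\cM$ decomposes as a direct sum of module subcategories; the stability of $\cM_2$ is where rigidity enters, via $\Hom_\cM(X\actL S,S')\cong\Hom_\cM(S,X^*\actL S')$. Indecomposability together with $M\neq 0$ then forces $\cM_2=0$. With those two repairs your argument is the standard proof and is correct; the right $\cC$-module statement for $\cN$ follows by the mirror-image computation, as you say.
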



\section{Module-theoretic division algebras}\label{sec:defDA}

In this part, we adapt Definition~\ref{def:DA}(ii,iii) to the abelian monoidal setting. In $\S$\ref{sec:genMon}, we introduce  module-theoretic division algebras in abelian monoidal categories. We then explore these structures in rigid, multifusion, and pivotal multifusion  categories in $\S\S$\ref{sec:rigid}, \ref{sec:multifusion}, \ref{sec:pivotal}, respectively.

\subsection{In abelian monoidal categories}\label{sec:genMon}

Let $\cC$ denote an abelian monoidal category.

\begin{definition}\label{def:SimpDA}
     A non-zero algebra $A\in\Alg(\cC)$ is a \textit{left} (resp., {\it right}) {\it simplistic division algebra} in~$\cC$ if the regular module $A_{\text{reg}}$ in $A\text{-}\Mod(\cC)$ (resp., in $\Mod\text{-}A(\cC)$) is simple, and we say that $A$ is a {\it simplistic division algebra} in~$\cC$ if both conditions hold.

The full subcategories of $\Alg(\cC)$ consisting of such objects are denoted by $\ell.\mathsf{SimpDivAlg}(\cC)$, by  $r.\mathsf{SimpDivAlg}(\cC)$, and by $\mathsf{SimpDivAlg}(\cC)$, respectively.
\end{definition}

\begin{definition}\label{def:EssDA}
     A non-zero algebra $A\in\Alg(\cC)$ is a \textit{left} (resp., \textit{right}) \textit{essential division algebra in~$\cC$} if the functor $(A\otimes -):\cC\to A\text{-}\Mod(\cC)$ (resp.,  $(-\otimes A):\cC\to\Mod\text{-}A(\cC)$) is essentially surjective. 
     We say that $A$ is an \textit{essential division algebra in~$\cC$} if both conditions hold.

 The full subcategories of $\Alg(\cC)$ consisting of such objects are denoted by $\ell.\mathsf{EssDivAlg}(\cC)$, by $r.\mathsf{EssDivAlg}(\cC)$, and by $\mathsf{EssDivAlg}(\cC)$, respectively.
\end{definition}

Note that Definition~\ref{def:SimpDA} was used in previous works involving division algebras in abelian monoidal categories \cite{GrossmanSnyder, Grossman, KongZheng}, as this recovers Definition~\ref{def:DA}(iii) when $\cC$ is the monoidal category of $\Bbbk$-vector spaces, $(\mathsf{Vec}, \otimes_\Bbbk, \Bbbk)$. On the other hand, Definition~\ref{def:EssDA} recovers Definition~\ref{def:DA}(ii) when~$\cC$ is $\mathsf{Vec}$ since if the functor $(-\otimes A)$ is essentially surjective, then every right $A$-module in $\cC$ is isomorphic to one in the image of $(-\otimes A)$, hence free. Similarly, if $(A\otimes -)$ is essentially surjective, then every left $A$-module in $\cC$ is free.
Moreover, the hypothesis that $A$ is non-zero in the terminology above is needed; else, by Lemma~\ref{lem:zeromod}, the conditions in Definitions~\ref{def:SimpDA} and \ref{def:EssDA} hold vacuously.

\begin{example} \label{ex:one}
If $\one \neq \mathsf{0}$, then $\one$ is  a  simplistic division algebra precisely when $\one$ is a simple object in $\cC$. But, $\one$ is always an essential division algebra since, by unitality, the functors $(\one\otimes-):\cC\to\cC$ and $(-\otimes\one):\cC\to\cC$ are  essentially surjective.
\end{example}

With the quick example above, we see that these two types of division algebras differ in the general abelian monoidal setting beyond $\mathsf{Vec}$. 

\subsection{In rigid, abelian monoidal categories with simple unit}\label{sec:rigid}
In this part, assume that $\cC$  is a rigid, abelian monoidal category with simple $\one$.
We will show that essential division algebras in~$\cC$ are simplistic division algebras in~$\cC$. We will also  present examples of  simplistic, non-essential division algebras in $\cC$, showing that these definitions remain distinct in this setting.

\begin{proposition}\label{Prop:ess to simp} Take  $A \in \mathsf{Alg}(\cC)$ where $A$ admits a simple left (resp., right) module in $\cC$. If $A$ is in $\ell./r.\mathsf{EssDivAlg}(\cC)$, then $A$ is in $\ell./r.\mathsf{SimpDivAlg}(\cC)$. 
\end{proposition}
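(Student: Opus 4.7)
The plan is to prove the right version; the left is symmetric. Fix a simple right $A$-module $S$ in $\Mod\text{-}A(\cC)$. Since $A$ is a right essential division algebra, the functor $(-\otimes A):\cC\to\Mod\text{-}A(\cC)$ is essentially surjective, so there exists an object $X\in\cC$ with $S\cong X\otimes A$ in $\Mod\text{-}A(\cC)$. Note that $X\neq\mathsf{0}$, for otherwise $S\cong\mathsf{0}$, contradicting simplicity.

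The main tool will be the endofunctor $F:=X\otimes-:\Mod\text{-}A(\cC)\to\Mod\text{-}A(\cC)$, which is to be shown both exact and conservative (in the sense that $F(M)\cong\mathsf{0}$ forces $M\cong\mathsf{0}$). Exactness follows from rigidity of $\cC$: on $\cC$ the functor $X\otimes-$ has left and right adjoints $\rdual X\otimes-$ and $X^*\otimes-$, hence is exact, and exactness on $\Mod\text{-}A(\cC)$ is inherited via the faithful exact forgetful functor that commutes with $F$. For conservativity, the simple-unit hypothesis enters: the morphism $\text{coev}_X^R:\one\to\rdual X\otimes X$ cannot vanish (else the rigidity identity $(\text{ev}_X^R\otimes\id_X)\circ(\id_X\otimes\text{coev}_X^R)=\id_X$ forces $X=\mathsf{0}$), so with $\one$ simple it must be a monomorphism. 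Tensoring on the right by any $M\in\Mod\text{-}A(\cC)$ and using exactness of $-\otimes M$ in $\cC$ plus the fact that the forgetful functor reflects monos yields a mono $M\hookrightarrow\rdual X\otimes X\otimes M$ in $\Mod\text{-}A(\cC)$, so $F(M)\cong\mathsf{0}$ implies $M\cong\mathsf{0}$.

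To close the argument, let $I\hookrightarrow A$ be any sub-module of $A_{\text{reg}}$ in $\Mod\text{-}A(\cC)$. Applying $F$ to the short exact sequence $0\to I\to A\to A/I\to 0$ yields an exact sequence $0\to X\otimes I\to X\otimes A\to X\otimes(A/I)\to 0$ in $\Mod\text{-}A(\cC)$. Since $X\otimes A\cong S$ is simple, the sub-module $X\otimes I$ equals $\mathsf{0}$ or $X\otimes A$; by conservativity of $F$ these force $I=\mathsf{0}$ or $A/I=\mathsf{0}$, respectively. Thus $A_{\text{reg}}$ is simple in $\Mod\text{-}A(\cC)$, so $A$ is a right simplistic division algebra.

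The main obstacle here is largely bookkeeping: confirming that $F=X\otimes-$ inherits exactness and conservativity from its counterpart on $\cC$. Once this compatibility between the left $\cC$-action on $\Mod\text{-}A(\cC)$ and the forgetful functor is in place, the proof reduces to a clean Schur-style application of the simplicity of the free module $X\otimes A$.
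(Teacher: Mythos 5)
Your proof is correct and follows the same overall skeleton as the paper's --- write the simple module as a free one, $S\cong X\otimes A$, tensor an arbitrary submodule $\iota: I\hookrightarrow A_{\text{reg}}$ by $X$, and exploit simplicity of $X\otimes A$ --- but you handle the decisive step by a genuinely different key lemma. Where the paper deduces both that $X\otimes I\neq\mathsf{0}$ and that ``$X\otimes\iota$ iso $\Rightarrow$ $\iota$ iso'' by invoking the Morita equivalence of $\one$ with the internal End algebra (the equivalence \eqref{eq:XX*} and its right-dual variant), you instead prove directly that $X\otimes -$ kills no non-zero module: the zig-zag identity forces $\operatorname{coev}^R_X\neq 0$, simplicity of $\one$ upgrades it to a monomorphism, and exactness of the tensor endofunctors then embeds every $M$ into ${}^*X\otimes X\otimes M$. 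Combined with exactness applied to $0\to I\to A\to A/I\to 0$, this yields the dichotomy $I=\mathsf{0}$ or $A/I=\mathsf{0}$ without ever needing $(-\otimes X)$ or $(X\otimes -)$ to be an equivalence of categories; in effect your conservativity lemma is a self-contained proof of exactly the fragment of that Morita equivalence the paper uses, and it isolates where the simple-unit hypothesis enters (``a non-zero map out of $\one$ is monic''). Two cosmetic remarks: with the paper's conventions the adjoints of $X\otimes -$ are $(X^*\otimes -)\dashv (X\otimes -)\dashv({}^*X\otimes -)$, so your left and right adjoints are swapped (only the existence of both is needed for exactness); and your use of the quotient $A/I$ and a short exact sequence in $\Mod\text{-}A(\cC)$ tacitly relies on that category being abelian with exact forgetful functor, which indeed holds here because rigidity makes the tensor endofunctors of $\cC$ exact.
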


\begin{proof}
We start with $A\in\Alg(\cC)$ that is a left essential division algebra admitting a simple left $A$-module $S$ in $\cC$. 
We aim to show that  $A_{\text{reg}} \in A\text{-}\Mod(\cC)$ is simple.
Now take a non-zero left ideal $\iota: I \rightarrow A$ in $\cC$; it suffices to show that the mono $\iota$ is an isomorphism in $\cC$. 

Since $A$ is a left essential division algebra, the module $S$ is free. So, there is an object $X \in\cC$ such that $S\cong A\otimes X\in A\text{-}\Mod(\cC)$. By exactness of the functor $(-\otimes X)$, a consequence of rigidity, we have that monos are preserved. Thus, we obtain a submodule $\iota \otimes \id_X: I \otimes X \rightarrow A\otimes X$. By simplicity of  $S\cong A\otimes X$, either $I\otimes X = \mathsf{0}$ or $\iota\otimes\id_X$ is an isomorphism.

Next, note that $(-\otimes X):\cC\to \Mod\text{-}(\rdual{X}\otimes X)(\cC)$ is an equivalence of categories via the right dual version of \eqref{eq:XX*}. Since $I\neq\mathsf{0}$, we conclude that $I\otimes X$, the image of $I$ under the equivalence $(-\otimes X)$, is also non-zero. Hence,  $\iota \otimes \id_X$ is an isomorphism in $\cC$, and hence in $(\rdual{X}\otimes X)\text{-}\Mod(\cC)$. But, equivalences of categories reflect isomorphisms, so $\iota$ must be an isomorphism, as desired.

 The right version argument is similar, using the equivalence $(X\otimes -):\cC\xrightarrow{\sim}(X\otimes X^*)\text{-}\Mod(\cC)$.
\end{proof}

\begin{remark} \label{rem:hypsimple}
The hypothesis on $A$ in Proposition~\ref{Prop:ess to simp} holds when the regular module in $A$-$\mathsf{Mod}(\cC)$ (resp., $\mathsf{Mod}$-$A(\cC)$) is left (resp., right) Artinian, is left (resp., right) Noetherian, or is semisimple.  
\end{remark}

Next, we construct simplistic, non-essential division algebras in $\cC$. To do this, we study when the internal End algebras of the regular left and right $\cC$-module categories are division algebras in $\cC$.

\begin{proposition}\label{prop:IntEndDA}
Take $\cC$ as above, and take an object $X$ in $\cC$.
    
    \begin{enumerate}\renewcommand{\labelenumi}{\textnormal{(\roman{enumi})}}
        \item $X\otimes X^*$ is a simplistic division algebra in $\cC$ if and only if $X$ is simple.
        
        \smallskip
             
        \item ${}^*X \otimes X$ is a simplistic division algebra in $\cC$ if and only if $X$ is simple.
       
        \smallskip

        \item $X\otimes X^*$ is an essential division algebra in $\cC$ if and only if $X$ is left invertible. 
                
        \smallskip
        
        \item ${}^*X\otimes X$ is an essential division algebra in $\cC$ if and only if $X$ is right invertible.
    \end{enumerate}
\end{proposition}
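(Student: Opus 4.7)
My plan is to leverage the Morita equivalences from~\eqref{eq:XX*} to translate the module-theoretic conditions on $A := X \otimes X^*$ (and its right-dual analogue ${}^*X \otimes X$) into conditions on $X$ itself in $\cC$.

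For parts~(i) and~(ii), I would first trace through the Morita equivalences on distinguished objects to identify the regular modules. Specifically, the equivalence $(- \otimes X^*) : \cC \xrightarrow{\sim} \Mod\text{-}A(\cC)$ sends $X$ to the right regular module $A_{\text{reg}}$, while $(X \otimes -) : \cC \xrightarrow{\sim} A\text{-}\Mod(\cC)$ sends $X^*$ to the left regular module; this is a direct check on the structure morphisms, which are governed by (co)evaluation maps. Since equivalences of categories preserve and reflect simplicity, and since $(-)^*$ is an autoequivalence of $\cC$ (so $X^*$ is simple iff $X$ is simple), both regular modules of $A$ are simple precisely when $X$ is simple. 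For ${}^*X \otimes X$, the analogous Morita equivalences $(- \otimes X) : \cC \xrightarrow{\sim} \Mod\text{-}({}^*X \otimes X)(\cC)$ and $({}^*X \otimes -) : \cC \xrightarrow{\sim} ({}^*X \otimes X)\text{-}\Mod(\cC)$ send ${}^*X$ and $X$ respectively to the regular modules, and the same reasoning applies using that $\rdual{(-)}$ is an autoequivalence.

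For parts~(iii) and~(iv), I would factor the free-module functors through the Morita equivalences and then invoke Lemma~\ref{lem:invertible}. The right free-module functor decomposes as $(- \otimes A) = (- \otimes X^*) \circ (- \otimes X)$, in which the outer factor is an equivalence, so $(- \otimes A)$ is essentially surjective iff $(- \otimes X) : \cC \to \cC$ is, which by Lemma~\ref{lem:invertible}(i) holds iff $X$ is left invertible. Likewise, $(A \otimes -) = (X \otimes -) \circ (X^* \otimes -)$ is essentially surjective iff $(X^* \otimes -)$ is, i.e.\ iff $X^*$ is right invertible; applying the contravariant autoequivalence $(-)^*$ and using $\rdual{(X^*)} \cong X$ shows this is in turn equivalent to $X$ being left invertible. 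Hence both essential-surjectivity conditions for $X \otimes X^*$ amount to the same thing, giving~(iii). Part~(iv) proceeds analogously, using $\rdual{(-)}$ in place of $(-)^*$, and yields right invertibility of $X$.

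The main technical obstacle will be the bookkeeping step of verifying that the Morita equivalences really carry the distinguished objects $X$ and $X^*$ (resp.\ ${}^*X$ and $X$) to the regular modules \emph{equipped with multiplication-as-action}, not merely to isomorphic modules with some unrelated action map. This requires a careful pass through the (co)evaluation-based structure morphisms implicit in~\eqref{eq:XX*}. The small companion observation that $X^*$ is right invertible iff $X$ is left invertible (and its right-dual analogue) is an immediate consequence of applying the appropriate duality functor to the defining invertibility isomorphism.
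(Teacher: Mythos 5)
Your proposal is correct and follows essentially the same route as the paper: parts (i)--(ii) via the Morita equivalences of \eqref{eq:XX*} carrying $X$ (resp.\ $X^*$, ${}^*X$) to the regular modules, and parts (iii)--(iv) by reducing essential surjectivity of the free-module functors to that of $(-\otimes X)$ or $(X\otimes -)$ and invoking Lemma~\ref{lem:invertible}. Your explicit factorization $(-\otimes (X\otimes X^*)) = (-\otimes X^*)\circ(-\otimes X)$ is just a cleaner packaging of the two-directional object chase the paper carries out, and the verification you flag (that the equivalences send the distinguished objects to the regular modules with multiplication-as-action) is exactly the (co)evaluation check implicit in the paper's argument.
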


\begin{proof}
For (i), recall the equivalence 
$(-\otimes X^*):\cC\xrightarrow{\sim}\Mod\text{-}(X\otimes X^*)(\cC)$
 from \eqref{eq:XX*}.
Applying this to $X$, we obtain that $X$ is simple in $\cC$ if and only if $X\otimes X^*$ is simple in $\Mod\text{-}(X\otimes X^*)(\cC)$, if and only if $X\otimes X^*$ is a right simplistic division algebra. Again, using \eqref{eq:XX*}, we get an equivalence $X\otimes (-)^*:\cC\xrightarrow{\sim} (X\otimes X^*)\text{-}\Mod(\cC)$, and applying this to $X$, we obtain  that $X$ is simple in $\cC$  if and only if $X\otimes X^*$ is a left simplistic division algebra. The proof of part (ii) follows likewise.

    For (iii), by way of Lemma~\ref{lem:invertible}(i), we first show that $X\otimes X^*$ is a right essential division algebra if and only if $(-\otimes X):\cC\to\cC$ is essentially surjective. For the forward direction, assume that $X\otimes X^*$ is a right essential division algebra, and let $Z\in\cC$ be any object. Take the module $Z\otimes X^*$ in $\Mod\text{-}(X\otimes X^*)(\cC)$, and  by the assumption, there is an object $\tilde Z\in\cC$ such that $\tilde Z\otimes X\otimes X^*\cong Z\otimes X^*$ in $\Mod\text{-}(X\otimes X^*)(\cC)$. Again, $(-\otimes X^*):\cC\xrightarrow{\sim}\Mod\text{-}(X\otimes X^*)(\cC)$ is an equivalence, so apply its quasi-inverse to get that $\tilde Z\otimes X\cong Z$ in $\cC$. Thus, $Z$ is in the essential image of $(-\otimes X)$.

    Conversely, assume that $(-\otimes X):\cC\to\cC$ is essentially surjective, and let $M\in\Mod\text{-}(X\otimes X^*)(\cC)$ be any right module in $\cC$. Since $(-\otimes X^*)$  is essentially surjective onto $\Mod\text{-}(X\otimes X^*)(\cC)$, there exists an object $\tilde M\in\cC$ such that $\tilde M\otimes X^*\cong M$ in $\Mod\text{-}(X\otimes X^*)(\cC)$. Moreover, since $(-\otimes X)$ is essentially surjective onto $\cC$, there exists an object $\tilde X\in\cC$ such that $\tilde X\otimes X\cong \one$ in $\cC$. Then, $(\tilde M\otimes \tilde X)\otimes (X\otimes X^*)\cong \tilde M\otimes X^*\cong M$ in $\Mod\text{-}(X\otimes X^*)(\cC)$, so that $M$ is in the essential image of $(-\otimes (X\otimes X^*)):\cC\to\Mod\text{-}(X\otimes X^*)(\cC)$, completing the direction.

Likewise, $X\otimes X^*$ is a left essential division algebra if and only if $(-\otimes X):\cC\to\cC$ is essentially surjective, by the equivalence $(X\otimes -):\cC\xrightarrow{\sim}\Mod\text{-}(X\otimes X^*)(\cC)$
 from \eqref{eq:XX*}, and by duality functors. Now apply Lemma~\ref{lem:invertible}(i) to conclude part (iii). The proof of part (iv) follows similarly.
 \end{proof}

\begin{remark} \label{rem:XX*}
We can recover Proposition~\ref{Prop:ess to simp}  for the algebras $X \otimes X^*$ and ${}^*X \otimes X$ in $\cC$, in the finite length case. First, by applying \eqref{eq:XX*} to the simple object $\one \in \cC$, we get simple modules $X^* \in \mathsf{Mod}\text{-}(X \otimes X^*)(\cC)$ and $X \in (X \otimes X^*)\text{-}\mathsf{Mod}(\cC)$. So, the hypotheses of  Proposition~\ref{Prop:ess to simp}  hold for the algebra $X \otimes X^*$ in $\cC$. Now assume that $X \otimes X^*$ is an essential division algebra  in $\cC$. Then, $X$ is left invertible [Proposition~\ref{prop:IntEndDA}(iii)], so $X$ is simple [Lemma~\ref{lem:invsimple}], and hence $X \otimes X^*$ is a simplistic division algebra [Proposition~\ref{prop:IntEndDA}(i)].
Similar arguments work for the algebra ${}^*X \otimes X$ in~$\cC$.
\end{remark}

Next, we provide examples of simplistic, non-essential division algebras in certain fusion categories.
Indeed, fusion categories satisfy the hypotheses on $\cC$ here, including those in Remark~\ref{rem:XX*}.

\begin{example}\label{ex:FIB}
Take the Fibonacci fusion category, $\mathsf{Fib}$, which has simple objects $\one$ and $\tau$ satisfying the fusion rules:
$\one\otimes\one\cong\one$, and $\one\otimes\tau\cong\tau\cong\tau\otimes\one$, and $\tau\otimes\tau\cong \one\sqcup \tau.$
See e.g., \cite[$\S$3.9]{Walton} or \cite{BookerDavydov}.
We have that $\tau\otimes\tau^*\cong \one\sqcup\tau$. Since $\tau$ is simple in $\cC$, Proposition~\ref{prop:IntEndDA}(i) implies that $1\sqcup\tau$ is a simplistic division algebra in $\mathsf{Fib}$.

But, $(-\otimes \tau):\mathsf{Fib}\to\mathsf{Fib}$ is not essentially surjective. Indeed, since $\mathsf{Fib}$ is semisimple, each object in $\mathsf{Fib}$ is isomorphic to $\one^n\sqcup \tau^m$, for some $m,n\geq 0$. Then, the essential image of $(-\otimes\tau)$ has objects  $(\one^n\sqcup \tau^m)\otimes\tau\cong\one^m\sqcup\tau^{m+n}$, for $m,n\geq 0$. So, $\one$ is not in the essential image of $(-\otimes\tau)$, and by Lemma~\ref{lem:invertible}(i) with Proposition~\ref{prop:IntEndDA}(iii), $\one\sqcup\tau$ is not an essential division algebra in~$\mathsf{Fib}$. 
\end{example}

\begin{example} \label{ex:grouptype}
Take a finite non-abelian group $G$, and take its (fusion) category $\mathsf{FdRep}(G)$ of finite-dimensional representations over $\Bbbk$. Here, $\otimes:= \otimes_\Bbbk$ and $\one := \Bbbk$. Now $\mathsf{FdRep}(G)$ has a simple object~$Z$ with $\dim_\Bbbk(Z)>1$. So, $Z \otimes Z^*$ is a simplistic division algebra in $\mathsf{FdRep}(G)$ [Proposition~\ref{prop:IntEndDA}(i)]. 

But, $\dim_\Bbbk(\one) = 1$, and $\dim_\Bbbk(X \otimes Y) = \dim_\Bbbk(X)  \dim_\Bbbk(Y)$ for $X,Y \in \mathsf{FdRep}(G)$. So, $Z$ above is not left invertible, and $Z \otimes Z^*$ is not an essential division algebra in $\mathsf{FdRep}(G)$ [Proposition~\ref{prop:IntEndDA}(iii)]. 
\end{example}

\pagebreak 

\subsection{In multifusion categories} \label{sec:multifusion} 
Proposition~\ref{prop:IntEndDA} used the Morita equivalence of $\one$ and $X\otimes X^*$ for any non-zero $X\in\cC$. More generally,  the results below use a Morita equivalence from Ostrik's Theorem  [Theorem~\ref{Thm:Ostrik}].

\begin{proposition}\label{Prop:simpMF}
Let $\cC$ be multifusion with $A\in\Alg(\cC)$ whose categories of modules in $\cC$ satisfy the hypothesis of Ostrik's Theorem. 
    \begin{enumerate}\renewcommand{\labelenumi}{\textnormal{(\roman{enumi})}}
        \item $\underline{\End}_{\Mod\text{-}A(\cC)}(M)$ is a right simplistic division algebra if and only if $M$ is simple in $\Mod\text{-}A(\cC)$.
        
        \smallskip
        
        \item $\underline{\End}_{A\text{-}\Mod(\cC)}(N)$ is a left simplistic division algebra if and only if $N$ is simple in $A\text{-}\Mod(\cC)$.
    \end{enumerate}
\end{proposition}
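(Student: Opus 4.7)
The plan is to apply Ostrik's Theorem directly to transport the simplicity question from $M$ (resp.\ $N$) over to the regular module over its internal End algebra, where it becomes the very definition of a one-sided simplistic division algebra.

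For part~(i), I would first dispose of the degenerate case $M \cong \mathsf{0}$ (where both sides of the biconditional fail, since $\underline{\End}_{\Mod\text{-}A(\cC)}(\mathsf{0}) \cong \mathsf{0}$ is not a simplistic division algebra by Definition~\ref{def:SimpDA}), and then assume $M$ is non-zero. Setting $B := \underline{\End}_{\Mod\text{-}A(\cC)}(M)$, the hypothesis that $\Mod\text{-}A(\cC)$ satisfies the conditions of Theorem~\ref{Thm:Ostrik} produces an equivalence of left $\cC$-module categories
\[
F := \underline{\Hom}_{\Mod\text{-}A(\cC)}(M,-)\colon \Mod\text{-}A(\cC) \xrightarrow{\sim} \Mod\text{-}B(\cC).
\]
The central step is to identify $F(M)$: by construction $F(M) = \underline{\Hom}_{\Mod\text{-}A(\cC)}(M,M) = B$, and I would argue that the induced right $B$-module structure on $F(M)$ is precisely the regular one, $B_{\text{reg}}$. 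This generalizes Lemma~\ref{lem:A-End(A)} and should follow by unravelling the right $\underline{\End}_\cM(M)$-action on $\underline{\Hom}_\cM(M,M')$ recalled in~$\S$\ref{BG:modcats}.

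Once this identification is in hand, the conclusion is immediate: equivalences of categories both preserve and reflect simple objects, so $M$ is simple in $\Mod\text{-}A(\cC)$ if and only if $B_{\text{reg}}$ is simple in $\Mod\text{-}B(\cC)$, which by Definition~\ref{def:SimpDA} is exactly the statement that $B$ is a right simplistic division algebra in~$\cC$. Part~(ii) is handled by the symmetric argument applied to the right $\cC$-module category $A\text{-}\Mod(\cC)$ together with the object $N$, using the second equivalence $\underline{\Hom}_{A\text{-}\Mod(\cC)}(-,N)$ in Theorem~\ref{Thm:Ostrik}.

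The main obstacle I anticipate is verifying cleanly that $F(M) \cong B_{\text{reg}}$ as \emph{right $B$-modules}, rather than merely as objects of $\cC$. This is the naturality computation that already underlies Lemma~\ref{lem:A-End(A)}, so I expect it to go through without much trouble once the internal-Hom adjunction is spelled out; after that single non-formal input is secured, the remainder of the proof is purely formal, and the proof of part~(ii) is a verbatim mirror image.
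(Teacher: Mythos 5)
Your proposal is correct and follows essentially the same route as the paper: apply Ostrik's equivalence $\underline{\Hom}_{\Mod\text{-}A(\cC)}(M,-)$, evaluate it at $M$ to land on the regular module over $\underline{\End}_{\Mod\text{-}A(\cC)}(M)$, and use that equivalences preserve and reflect simplicity. Your explicit treatment of the zero case and of the identification $F(M)\cong B_{\text{reg}}$ as right $B$-modules only makes precise details the paper leaves implicit.
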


\begin{proof}
    Applying Ostrik's Theorem first to $\Mod\text{-}A(\cC)$, we obtain that for any non-zero $M\in\Mod\text{-}A(\cC)$, the functor $\underline{\Hom}_{\Mod\text{-}A(\cC)}(M,-):\Mod\text{-}A(\cC)\to \Mod\text{-}\underline{\End}_{\Mod\text{-}A(\cC)}(M)(\cC)$ is an equivalence of categories. Applying this equivalence to $M\in\Mod\text{-}A(\cC)$, it follows that $\underline{\End}_{\Mod\text{-}A(\cC)}(M)$ is simple in $\Mod\text{-}\underline{\End}_{\Mod\text{-}A(\cC)}(M)(\cC)$ if and only if $M$ is simple in $\Mod\text{-}A(\cC)$, proving (i).
    A similar application of Ostrik's Theorem to the right $\cC$-module category $A\text{-}\Mod(\cC)$ gives (ii).
\end{proof}

\begin{remark}
Deriving simplistic division algebras from  internal End algebras of simple objects was considered in \cite[Theorem 2.5]{Grossman}, \cite[Theorem 2.8]{GrossmanSnyder}, and \cite[Lemma 3.6(4)]{KongZheng}, without considering the converse statement. 
\end{remark}

\begin{proposition}\label{Prop:essMF}
Let $\cC$ be multifusion with $A\in\Alg(\cC)$ whose categories of modules in $\cC$ satisfy the hypothesis of Ostrik's Theorem. 
\begin{enumerate}\renewcommand{\labelenumi}{\textnormal{(\roman{enumi})}}
\item For any $M\in\Mod\text{-}A(\cC)$, we have that $\underline{\End}_{\Mod\text{-}A(\cC)}(M)$ is a right essential division algebra if and only if $(-\otimes M):\cC\to\Mod\text{-}A(\cC)$ is essentially surjective.
        
\smallskip
        
\item For any $N\in A\text{-}\Mod(\cC)$, we have that $\underline{\End}_{A\text{-}\Mod(\cC)}(N)$ is a left essential division algebra if and only if $(N\otimes -):\cC\to A\text{-}\Mod(\cC)$ is essentially surjective.
\end{enumerate}
\end{proposition}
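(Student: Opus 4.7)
The strategy mirrors Proposition~\ref{Prop:simpMF}: transport the condition across the equivalence of $\cC$-module categories produced by Ostrik's Theorem. I focus on (i); the argument for (ii) is entirely symmetric, applied to the right $\cC$-module category $A\text{-}\Mod(\cC)$ in place of the left $\cC$-module category $\Mod\text{-}A(\cC)$.

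First, set $B := \underline{\End}_{\Mod\text{-}A(\cC)}(M)$. By Ostrik's Theorem applied to the left $\cC$-module category $\Mod\text{-}A(\cC)$ with the non-zero object $M$, the internal-Hom functor
\[
\Phi \; := \; \underline{\Hom}_{\Mod\text{-}A(\cC)}(M,-) \colon \Mod\text{-}A(\cC) \;\xrightarrow{\sim}\; \Mod\text{-}B(\cC)
\]
is an equivalence of left $\cC$-module categories. By definition of $B$, we have $\Phi(M) \cong B_{\text{reg}}$ in $\Mod\text{-}B(\cC)$.

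Next, I would use that $\Phi$ is a $\cC$-module functor: there is a natural isomorphism $\Phi(X \otimes N) \cong X \otimes \Phi(N)$ in $\Mod\text{-}B(\cC)$ for all $X\in\cC$ and $N\in\Mod\text{-}A(\cC)$, where the left $\cC$-actions on both module categories are given by $\otimes$ (as in $\S$\ref{BG:modcats}). Specializing to $N=M$ and using $\Phi(M) \cong B_{\text{reg}}$, this yields a natural isomorphism of functors $\cC \to \Mod\text{-}B(\cC)$:
\[
\Phi \circ (- \otimes M) \;\natisom\; (- \otimes B).
\]

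Finally, since $\Phi$ is an equivalence and equivalences both preserve and reflect essential surjectivity, the displayed isomorphism forces $(-\otimes M)\colon \cC \to \Mod\text{-}A(\cC)$ to be essentially surjective if and only if $(-\otimes B)\colon \cC \to \Mod\text{-}B(\cC)$ is essentially surjective. The latter condition is exactly the definition of $B$ being a right essential division algebra in $\cC$, completing (i).

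The only real subtlety, and the step I would write out most carefully, is the use of the $\cC$-module structure on $\Phi$ to get $\Phi(X\otimes M)\cong X\otimes\Phi(M)$; all else is a direct transport across the Ostrik equivalence, in exact parallel with the proof of Proposition~\ref{Prop:simpMF}.
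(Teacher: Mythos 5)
Your proposal is correct and follows essentially the same route as the paper: the key natural isomorphism $\Phi\circ(-\otimes M)\cong(-\otimes B)$ that you derive from the $\cC$-module structure of the Ostrik equivalence is exactly the paper's identity $(-\otimes\underline{\End}_{\Mod\text{-}A(\cC)}(M))\cong\underline{\Hom}_{\Mod\text{-}A(\cC)}(M,-\otimes M)$, obtained there from $X\otimes\underline{\Hom}_{\Mod\text{-}A(\cC)}(M,M')\cong\underline{\Hom}_{\Mod\text{-}A(\cC)}(M,X\otimes M')$ (cited as \cite[Lemma 7.9.4]{EGNO}). The concluding step, that composing with an equivalence preserves and reflects essential surjectivity, is likewise identical.
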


\begin{proof}
To prove (i), note that $X\otimes \underline{\Hom}_{\Mod\text{-}A(\cC)}(M,M')\cong \underline{\Hom}_{\Mod\text{-}A(\cC)}(M,X\otimes M')$, for any $X\in\cC$ and $M,M'\in\Mod\text{-}A(\cC)$; see \cite[Lemma 7.9.4]{EGNO}. Therefore, we get that 
\[
(-\otimes\underline{\End}_{\Mod\text{-}A(\cC)}(M)) \, \cong \, \underline{\Hom}_{\Mod\text{-}A(\cC)}(M,-\otimes M)
\]
as functors from $\cC$ to $\Mod\text{-}\underline{\End}_{\Mod\text{-}A(\cC)}(M)(\cC)$. Moreover, $\underline{\Hom}_{\Mod\text{-}A(\cC)}(M,-\otimes M)$ is the composition of $(-\otimes M):\cC\to\Mod\text{-}A(\cC)$ and $\underline{\Hom}_{\Mod\text{-}A(\cC)}(M,-):\Mod\text{-}A(\cC)\to \Mod\text{-}\underline{\End}_{\Mod\text{-}A(\cC)}(M)(\cC)$, with the second being  an equivalence of categories by Ostrik's Theorem. Hence 
\[
(-\otimes\underline{\End}_{\Mod\text{-}A(\cC)}(M)) \, \cong \, \underline{\Hom}_{\Mod\text{-}A(\cC)}(M,-)\circ (-\otimes M)
\]
is essentially surjective if and only if $(-\otimes M)$ is essentially surjective, and we are done.
    The proof of (ii) is analogous.
\end{proof}

\begin{remark}\label{rem:schur} Propositions~\ref{Prop:simpMF} and \ref{Prop:essMF} are analogues of Schur's Lemma, which states that for a simple module $M$ over any non-zero $\Bbbk$-algebra, the $\Bbbk$-algebra $\End(M)$ is a division algebra in $\mathsf{Vec}_\Bbbk$. 
\end{remark}

\begin{example}\label{rem:A=1}
Let $A \in \mathsf{Alg}(\cC)$ such that $\Mod\text{-}A(\cC)$ satisfies the hypothesis of Ostrik's Theorem. Also, let $M\in\Mod\text{-}A(\cC)$ be a left invertible module, i.e., there is some $N\in A\text{-}\Mod(\cC)$ satisfying $N\otimes M\cong A_{\text{reg}}\in(A,A)\text{-}\Bimod(\cC)$. Then, $M'\cong M'\otimes_A A_{\text{reg}}\cong M'\otimes_A N\otimes M$ in $\Mod\text{-}A(\cC)$, for any $M'\in\Mod\text{-}A(\cC)$. Hence, $(-\otimes M):\cC\to\Mod\text{-}A(\cC)$ is essentially surjective, and  Proposition~\ref{Prop:essMF} gives that $\underline{\End}_{\Mod\text{-}A(\cC)}(M)\cong (M\otimes_A\rdual M)^*$ is a right essential division algebra in~$\cC$.

When $A = \one$, we recover Proposition~\ref{prop:IntEndDA}(iii): namely, if an object $X\in\cC$ is left invertible, then $(X \otimes \rdual{X})^* \cong X\otimes X^*$ is a right essential division algebra in $\cC$. 
\end{example}

\subsection{In pivotal multifusion categories}\label{sec:pivotal} 
We now address whether the distinction between left and right division algebras is necessary, and we find that in a pivotal multifusion category the distinction is not needed. Note that such categories are abundant,  as it is conjectured that every fusion category must be pivotal \cite[Conjecture~2.8]{EtingofNikshychOstrik}.

\begin{lemma} \label{lemma:pivalg}
Let $\cC$ be a pivotal abelian monoidal category with $A\in\Alg(\cC)$. Then the algebras $\underline{\End}_{\Mod\text{-}A(\cC)}(M)$ and $ \underline{\End}_{A\text{-}\Mod(\cC)}(\rdual M)$ are isomorphic in $\cC$, for any $M\in\Mod\text{-}A(\cC)$.
\end{lemma}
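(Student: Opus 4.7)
The plan is to reduce both internal End algebras to concrete formulas in $\cC$ involving $M \otimes_A \rdual M$, and then transport one formula to the other using the pivotal structure.

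First, I would apply the explicit formulas recalled in $\S$\ref{BG:modcats}. These give
\[
\underline{\End}_{\Mod\text{-}A(\cC)}(M) \;\cong\; (M \otimes_A \rdual M)^*, \qquad \underline{\End}_{A\text{-}\Mod(\cC)}(\rdual M) \;\cong\; \rdual{\bigl((\rdual M)^* \otimes_A \rdual M\bigr)}.
\]
In any rigid category, the functors $(-)^*$ and $\rdual{(-)}$ are quasi-inverse contravariant equivalences, so $(\rdual M)^* \cong M$ in $\Mod\text{-}A(\cC)$, and this isomorphism is compatible with $-\otimes_A \rdual M$. Setting $C := M \otimes_A \rdual M \in \cC$, I have reduced the claim to
\[
C^* \;\cong\; \rdual C \qquad \text{as algebras in } \cC.
\]

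Next, I would invoke pivotality. By definition, $\cC$ admits a monoidal natural isomorphism $\hat\jmath \colon (-)^* \natisom \rdual{(-)}$. Evaluating at $C$ yields an isomorphism $\hat\jmath_C \colon C^* \xrightarrow{\sim} \rdual C$ in $\cC$. The remaining content is to check that $\hat\jmath_C$ is an algebra morphism. The key observation is that $C$ inherits a natural coalgebra structure in $\cC$, with comultiplication and counit arising from the interaction of the (co)evaluation maps for $M$ and $\rdual M$ with the $A$-actions (the argument is parallel to the identifications used in the proof of Lemma~\ref{lem:A-End(A)}). Each of the algebra structures on $C^*$ and on $\rdual C$ is then the one obtained by applying the corresponding duality functor to this coalgebra structure, i.e.\ the multiplication is the dual of the comultiplication composed with the strong-monoidal coherence $(C\otimes C)^* \cong C^* \otimes C^*$ (respectively $\rdual{(C \otimes C)} \cong \rdual C \otimes \rdual C$), and the unit is the dual of the counit.

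Because $\hat\jmath$ is a monoidal natural isomorphism, it intertwines both of these coherence isomorphisms and naturalizes the dual of the counit, so the square expressing that $\hat\jmath_C$ respects the units and the one expressing that it respects the multiplications commute automatically. Hence $\hat\jmath_C$ is an algebra isomorphism, completing the proof. The main obstacle is the bookkeeping in the last step: namely, verifying that the algebra structure on each internal End object truly coincides with the dual of the above coalgebra structure on $C$, and then diagramatically tracing through the monoidal naturality of $\hat\jmath$. An alternative, more conceptual route would be to promote the pivotal natural isomorphism $\hat\jmath$ to a monoidal isomorphism between the contravariant duality equivalences $(-)^*\colon A\text{-}\Mod(\cC)\to\Mod\text{-}A(\cC)$ and $\rdual{(-)}\colon \Mod\text{-}A(\cC)\to A\text{-}\Mod(\cC)$, which transports internal End algebras as desired.
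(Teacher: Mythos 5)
Your proposal is correct and follows essentially the same route as the paper: both identify the two internal Ends with $(M\otimes_A\rdual M)^*$ and $\rdual{(M\otimes_A\rdual M)}$, observe that their algebra structure maps are the left and right duals of the same pair of morphisms $\mu := \id_M\otimes_A\operatorname{coev}^R_M\otimes_A\id_{\rdual M}$ and $\eta$ (your ``coalgebra structure'' on $C$), and then use monoidal naturality of the pivotal isomorphism $\hat\jmath$ to see that $\hat\jmath_{M\otimes_A\rdual M}$ intertwines the two dualized structures. The only cosmetic difference is that you pass through $\rdual{\bigl((\rdual M)^*\otimes_A\rdual M\bigr)}$ and simplify via $(\rdual M)^*\cong M$, which the paper does implicitly.
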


\begin{proof} 

We have the following isomorphism:
\[\underline{\End}_{\Mod\text{-}A(\cC)}(M) \cong (M\otimes_A\rdual{M})^* \xrightarrow{\hat{\jmath}_{M\otimes_A\rdual M}}\rdual{(}M\otimes_A\rdual M)\cong \underline{\End}_{A\text{-}\Mod(\cC)}(\rdual{M}).\]
Thus, it is suffices to show that $\hat{\jmath}_{M\otimes_A\rdual M}$ is an algebra map. To do this, recall that the algebra structure maps of $(M\otimes_A\rdual{M})^*$ are given by $m = (\mu)^*$ and $u = (\eta)^*$, while the algebra structure maps of $\rdual(M\otimes_A\rdual M)$ are given by $m' = \rdual{\mu}$ and $u' = \rdual{\eta}$, where
$\mu:= \id_M \otimes_A \operatorname{coev}_M^R \otimes_A \, \id_{\rdual M}$. Then, $\eta$ is defined as the map from $M\otimes_A\rdual{M}$ to $\one$ satisfying $\text{ev}_M^R = \eta \, \pi$, where $\pi$ is the projection associated to $M \otimes_A \rdual{M} = \text{coeq}(\actR\otimes\id_{\rdual M}, \, \id_M\otimes\actL)$.
Using this structure on the internal Ends, and the fact that $\hat \jmath$ is a monoidal natural transformation, it is straightforward to verify that $\hat\jmath$ is an algebra isomorphism.
\end{proof}

\begin{proposition}\label{prop:LiffR}
    Let $\cC$ be a pivotal multifusion category with $A\in\Alg(\cC)$.
    \begin{enumerate}\renewcommand{\labelenumi}{\textnormal{(\roman{enumi})}}
        \item $A\in\ell.\mathsf{SimpDivAlg}(\cC)$ if and only if $A\in r.\mathsf{SimpDivAlg}(\cC)$.
        \item $A\in\ell.\mathsf{EssDivAlg}(\cC)$ if and only if $A\in r.\mathsf{EssDivAlg}(\cC)$.
    \end{enumerate}
\end{proposition}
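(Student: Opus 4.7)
The plan is to reduce both parts of the proposition to statements about the object $\rdual{A}$ inside $A\text{-}\Mod(\cC)$, exploiting the pivotal structure to bridge the left and right settings. The bridge I will construct comes from combining Lemmas~\ref{lem:A-End(A)} and~\ref{lemma:pivalg} (the latter applied with $M = A_{\text{reg}}$) to obtain the chain of algebra isomorphisms
\[
A \;\cong\; \underline{\End}_{\Mod\text{-}A(\cC)}(A_{\text{reg}}) \;\cong\; \underline{\End}_{A\text{-}\Mod(\cC)}(\rdual{A})
\]
in $\cC$. The second ingredient is the contravariant equivalence $\rdual{(-)}: \Mod\text{-}A(\cC) \xrightarrow{\sim} A\text{-}\Mod(\cC)$ afforded by rigidity, together with its restriction to a contravariant autoequivalence of $\cC$; these preserve simple objects and convert essential surjectivity of a functor into essential surjectivity of its composites with them.

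For part (i), my approach is to chain the following equivalences: $A \in r.\mathsf{SimpDivAlg}(\cC)$ is by definition the simplicity of $A_{\text{reg}}$ in $\Mod\text{-}A(\cC)$, which under the equivalence $\rdual{(-)}$ is equivalent to the simplicity of $\rdual{A}$ in $A\text{-}\Mod(\cC)$. Then Proposition~\ref{Prop:simpMF}(ii) applied with $N = \rdual{A}$ should convert this into the statement that $\underline{\End}_{A\text{-}\Mod(\cC)}(\rdual{A}) \cong A$ is a left simplistic division algebra in $\cC$, i.e., $A \in \ell.\mathsf{SimpDivAlg}(\cC)$.

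For part (ii), I will carry out the analogous chain of reductions for essential surjectivity. Essential surjectivity of $(-\otimes A): \cC \to \Mod\text{-}A(\cC)$ is equivalent to that of its composition with $\rdual{(-)}$, and using the rigidity identity $\rdual{(X\otimes A)} \cong \rdual{A} \otimes \rdual{X}$ together with the fact that $\rdual{(-)}|_{\cC}$ is an autoequivalence, this should further reduce to essential surjectivity of $(\rdual{A} \otimes -): \cC \to A\text{-}\Mod(\cC)$. Finally, Proposition~\ref{Prop:essMF}(ii) with $N = \rdual{A}$ yields equivalence with $\underline{\End}_{A\text{-}\Mod(\cC)}(\rdual{A}) \cong A$ being a left essential division algebra, that is, $A \in \ell.\mathsf{EssDivAlg}(\cC)$.

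The main obstacle I anticipate is verifying the Ostrik hypothesis---non-zero indecomposability of the relevant $\cC$-module categories---needed to apply Propositions~\ref{Prop:simpMF} and~\ref{Prop:essMF} inside the argument. I expect this to be addressable because either form of the division algebra property on $A$ forces one of $\Mod\text{-}A(\cC)$ or $A\text{-}\Mod(\cC)$ to be indecomposable, and the duality equivalence $\rdual{(-)}$ transports indecomposability to the opposite side, so that Ostrik applies whichever way it is needed.
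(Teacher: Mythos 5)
Your proposal is correct and follows essentially the same route as the paper's proof: both use the duality equivalence between $\Mod\text{-}A(\cC)$ and $A\text{-}\Mod(\cC)$ to transport the simplicity/essential-surjectivity condition, apply Propositions~\ref{Prop:simpMF} and~\ref{Prop:essMF} to the dual of the regular module, and then identify the resulting internal End with $A$ via Lemmas~\ref{lem:A-End(A)} and~\ref{lemma:pivalg} (the paper runs the chain from left to right using $A^*$, you run it from right to left using $\rdual{A}$, which is just the mirror image). Your explicit attention to verifying the Ostrik hypothesis is a point the paper's proof leaves implicit, and your sketch of why the division-algebra conditions force indecomposability of the module category is sound.
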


\begin{proof}
Start with algebras $A$ and $B$ in $\cC$ that are a left simplistic division algebra and left essential division algebra, respectively. Using the equivalence of categories $(-)^*$ from left modules to right modules we obtain that $A^*$ is simple in $\Mod\text{-}A(\cC)$, and that $(-\otimes B^*):\cC\to\Mod\text{-}B(\cC)$ is essentially surjective.
Proposition~\ref{Prop:simpMF}(i) then gives that $\underline{\End}_{\Mod\text{-}A(\cC)}(A^*)$ is a right simplistic division algebra, and Proposition~\ref{Prop:essMF}(i) gives that $\underline{\End}_{\Mod\text{-}B(\cC)}(B^*)$ is a right essential division algebra. By Lemmas~\ref{lem:A-End(A)} and~\ref{lemma:pivalg}, we get that as algebras, 
\[
\underline{\End}_{\Mod\text{-}A(\cC)}(A^*) \; \cong \; \underline{\End}_{A\text{-}\Mod(\cC)}(\rdual(A^*)) \; \cong \; \underline{\End}_{A\text{-}\Mod(\cC)}(A) \; \cong  \; A
\]
and similarly, $\underline{\End}_{\Mod\text{-}B(\cC)}(B^*)\cong B$. Thus, $A$ is a right simplistic division algebra, and $B$ is a right essential division algebra, as desired.
The backwards direction is analogous.
\end{proof}

\section{Monad-theoretic division algebras}\label{sec:monads}

Previously, we were restricted to working in abelian monoidal and (multi)fusion categories to study simplistic division algebras. But essential division algebras can be defined in any monoidal category; we will see here that they can be examined via monads. Background material on monads is in $\S$\ref{BG:Monads}. Monadic division algebras are introduced in $\S$\ref{subsec:MonDA}, and connections to essential division algebras are discussed there. Finally, we provide  examples of monadic division algebras in $\S$\ref{subsec:MontoEssDA}.

\subsection{Background on monads}\label{BG:Monads} References on monads include \cite[Chapter 5]{Riehl} and \cite[\S\S4.3.2, 4.4.3]{Walton}. For strong monads, see \cite{Moggi} or \cite{McDermottUustalu}.

\medskip

Let $\cA$ be any category. A \textit{monad on $\cA$} is an algebra in the monoidal category $(\End(\cA), \circ, \Id_\cA)$. More explicitly, a monad is a tuple $(T,\mu,\eta)$ where $T:\cA\to\cA$ is an endofunctor, and $\mu:T\circ T\Rightarrow T$ and $\eta:\Id_\cA\Rightarrow T$ are natural transformations satisfying associativity and unitality axioms.

\medskip

For example, given an adjunction, $(F: \cA \to \cB) \dashv (G: \cB \to \cA)$ with unit $\eta$ and counit $\varep$, we get that $(GF,G\varep F,\eta)$ is a monad on~$\cA$.

\medskip

Next, for a monad $(T,\mu,\eta)$ on $\cA$, the \textit{Eilenberg-Moore category $\cA^T$ of $T$} is the category with objects $(Y,\xi_Y)$ where $Y\in\cA$ and $\xi_Y:T(Y) \to Y \in \cA$. A morphism $f:(Y,\xi_Y)\to (Z,\xi_Z)$ is a morphism $f:Y\to Z\in\cA$, satisfying $f \circ \xi_Y = \xi_Z \circ T(f)$. 
This construction produces an adjunction 
\[(\operatorname{Free}^T :\cA\to\cA^T) \; \dashv  \; (\operatorname{Forg}^T:\cA^T\to\cA),\]
where $\operatorname{Free}^T(Y):= (T(Y),\mu_Y)$,  $\operatorname{Free}^T(f):= T(f)$, and $\operatorname{Forg}^T(Y, \xi_Y):= Y$, $\operatorname{Forg}^T(f):= f$.
The monad associated to this adjunction coincides with the original monad $T$. 

\medskip

Alternatively, the \textit{Kleisli category $\cA_T$ of $T$} is the category whose objects are the objects of~$\cA$, where  $\Hom_{\cA_T}(X,Y) = \Hom_{\cA}(X,T(Y))$. The composition $f\in\Hom_{\cA_T}(X,Y)$ and $g\in\Hom_{\cA_T}(Y,Z)$ is  given by $g\circ_T f := \mu_Z\circ T(g)\circ f$, which is in  $\Hom_{\cA}(X,T(Z))$. Again, this produces an adjunction:
\[(
F_T :\cA\to\cA_T) \; \dashv  \; (U_T:\cA_T\to\cA),\]
where $F_T(Y):= Y$,  $F_T(f:Y\to Z):= \eta_Z \circ f$, and $U_T(Y):= T(Y)$, $U_T(f:Y\to Z):= \mu_Z \circ T(f)$.
Again, the monad associated to this adjunction coincides with the original monad $T$. 

\medskip
 
 The category $\cA_T$ can be identified with the essential image of $\operatorname{Free}^T$ in $\cA^T$, via the embedding $K:\cA_T \to \cA^T$ given by $K(Y) = T(Y)$ and $K(f:Y \to Z) = \mu_Z \circ T(f)$. Hence, the objects of the Kleisli category of $T$ are considered as the free objects of the Eilenberg-Moore category of $T$.

\medskip

The Eilenberg-Moore and Kleilsi categories are, respectively, the terminal and initial solutions to the problem of finding an adjunction which gives rise to a certain monad. Namely, given a monad $T$ on $\cA$, consider the category $\mathsf{Adj}_T$, whose objects are adjunctions $(F:\cA\to\cB)\dashv (G:\cB\to\cA)$ which induce the monad $T$. Morphisms in this category are defined as follows. Given adjunctions $\mA_1:=(F_1:\cA\to\cB_1)\dashv(G_1:\cB_1\to\cA)$ and $\mA_2:=(F_2:\cA\to\cB_2)\dashv(G_2:\cB_2\to\cA)$, a morphism $K:\mA_1\to\mA_2$ is a functor $K:\cB_1\to\cB_2$ which satisfies $K\circ F_1 = F_2$ and $G_2\circ K = G_1$. 

\medskip

The unique functor $K_T:\cA_T\to\cA^T$ satisfying $K_T\circ F_T = \operatorname{Free}^T$ and $\operatorname{Forg}^T\circ K_T = U_T$ is called the \textit{comparison functor}, and it coincides with the embedding $K$ of $\cA_T$ into $\cA^T$ mentioned above.

\medskip

We are interested in the case when the comparison functor is an equivalence. Here, the category $\mathsf{Adj}_T$ has only one object, up to isomorphism, thus justifying the following terminology.

\begin{definition}
    A monad $T:\cA\to\cA$ is said be \textit{adjunction-trivial} if $\cA_T\simeq \cA^T$. 
\end{definition}

\subsection{Monadic division algebras}\label{subsec:MonDA} Let $\cC$ be a strict monoidal category. For $A \in \mathsf{Alg}(\cC)$, we get  monads  $\left((A\otimes -), \, m_A\otimes\id_{(-)},\, u_A\otimes\id_{(-)}\right)$ and $\left((-\otimes A),\, \id_{(-)}\otimes m_A, \,\id_{(-)}\otimes u_A\right)$ on $\cC$. To be consistent with the exclusion of the zero algebra as a division algebra in abelian monoidal categories, via  Lemma~\ref{lem:zeromod}, we will consider the condition below:

\smallskip

\begin{center}
 $A \in \mathsf{Alg}(\cC)$ satisfies that  $A$-$\mathsf{Mod}(\cC)$ and $\mathsf{Mod}$-$A(\cC)$ both have  more than one isoclass of objects. \hfill ($\star$)
\end{center}

\begin{definition}\label{def:monadic}
An algebra $A$ in $\cC$ subject to $(\star)$ is called a \textit{left (resp., right) monadic division algebra} if the monad $(A\otimes -)$ (resp., $(- \otimes A)$) on $\cC$ is adjunction-trivial.
The full subcategory of $\Alg(\cC)$ on these algebras is denoted by $\ell.\mathsf{MonDivAlg}(\cC)$ (resp.,  $r.\mathsf{MonDivAlg}(\cC)$).
\end{definition}

Note that essential division algebras can also be defined in $\cC$ by replacing the non-zero condition on $A$ with ($\star$). The connection to monadic division algebras in $\cC$ is given below.

\pagebreak 

\begin{proposition}\label{Prop:MonEss}
Take $A\in\Alg(\cC)$ subject to $(\star)$. Then, we have that 
$A\in \ell./r.\mathsf{MonDivAlg}(\cC)$ if and only if $A\in \ell./r.\mathsf{EssDivAlg}(\cC)$.  
\end{proposition}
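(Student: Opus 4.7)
The plan is to reduce the equivalence to the general categorical fact that, for any monad $T$ on a category $\cA$, the comparison functor $K_T\colon \cA_T \to \cA^T$ is always fully faithful; hence it is an equivalence exactly when it is essentially surjective, i.e., when every Eilenberg-Moore algebra is isomorphic to a free one. Specialising to $T := (A \otimes -)$ on $\cC$, adjunction-triviality of $T$ will then translate into the statement that the free-module functor hits every left $A$-module up to isomorphism, which is precisely the condition $A \in \ell.\mathsf{EssDivAlg}(\cC)$.

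The first technical step I would carry out is the routine identification $\cC^T \cong A\text{-}\Mod(\cC)$. An Eilenberg-Moore algebra for $T$ is a pair $(Y, \xi_Y : A \otimes Y \to Y)$ in $\cC$, and the compatibility conditions with $\mu = m_A \otimes \id_{(-)}$ and $\eta = u_A \otimes \id_{(-)}$ are literally the left-module axioms for $A$. Under this identification, $\operatorname{Free}^T(Y) = (A \otimes Y, m_A \otimes \id_Y)$ is exactly the free left $A$-module on $Y$, so $\operatorname{Free}^T$ coincides with the functor $(A \otimes -) : \cC \to A\text{-}\Mod(\cC)$ from Definition~\ref{def:EssDA}.

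Second, I would verify that the comparison $K_T : \cC_T \to \cC^T$ is fully faithful, which is the classical fact that the Kleisli category embeds into Eilenberg-Moore as the full subcategory of free algebras. Concretely, the assignment
\[
\Hom_{\cC_T}(X,Y) = \Hom_\cC(X, A \otimes Y) \;\longrightarrow\; \Hom_{A\text{-}\Mod(\cC)}(A \otimes X, \, A \otimes Y), \qquad f \mapsto (m_A \otimes \id_Y)\circ(\id_A \otimes f),
\]
has inverse $g \mapsto g \circ (u_A \otimes \id_X)$, with mutual inverseness enforced by the unital and associative axioms on $A$ (equivalently, the monad axioms on $T$). Combining these two steps, $K_T$ is an equivalence if and only if $\operatorname{Free}^T = (A \otimes -)$ is essentially surjective onto $A\text{-}\Mod(\cC)$, yielding the left version of the proposition. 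The right-handed case is symmetric, applied to the monad $T' := (- \otimes A)$ whose Eilenberg-Moore category is $\Mod\text{-}A(\cC)$ and whose free functor is $(- \otimes A)$.

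The non-triviality hypothesis $(\star)$ plays no role in the categorical argument itself; it simply matches, on both sides of the biconditional, the restriction already present in Definition~\ref{def:monadic} and in the $(\star)$-version of Definition~\ref{def:EssDA}, so it transfers automatically. The main obstacle I anticipate is not conceptual but notational: one must write the identifications $\cC^T \cong A\text{-}\Mod(\cC)$ and $\operatorname{Free}^T \cong (A \otimes -)$ carefully enough that adjunction-triviality of $T$ translates verbatim to essential surjectivity of the free-module functor, and likewise on the right.
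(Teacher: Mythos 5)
Your proposal is correct and follows essentially the same route as the paper: identify $\cC^{(A\otimes-)}$ with $A\text{-}\Mod(\cC)$ so that $\operatorname{Free}^T$ becomes the free-module functor, note that the comparison functor realises the Kleisli category as the full subcategory of free modules, and conclude that adjunction-triviality is equivalent to essential surjectivity of $(A\otimes-)$ (and symmetrically for $(-\otimes A)$). The paper's proof is just a terser statement of these same identifications, with the full faithfulness of the comparison functor left implicit.
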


\begin{proof}
This follows as $\cC^{(A\otimes -)}\simeq A\text{-}\Mod(\cC)$ and $\cC^{(-\otimes A)}\simeq \Mod\text{-}A(\cC)$, and because
under these equivalences,  $\cC_{(A\otimes -)}$ and $\cC_{(-\otimes A)}$ are the left and right free $A$-modules in $\cC$, respectively. 
\end{proof}

\begin{example} \label{ex:monadVec} For the monoidal category of $\Bbbk$-vector spaces, $(\mathsf{Vec}, \otimes_\Bbbk, \Bbbk)$, with $A \in \mathsf{Alg}(\mathsf{Vec})$, consider the monad  $(- \otimes_\Bbbk A)$  on $\mathsf{Vec}$. Then, $\mathsf{Vec}^{(- \otimes_\Bbbk A)} \simeq {\mathsf{Vec}}_{(- \otimes_\Bbbk A)}$  if and only if every right $A$-module over $\Bbbk$ is free, which happens precisely when $A$ is a division algebra over~$\Bbbk$. So, monadic division algebras in $\mathsf{Vec}$ again recover division algebras over $\Bbbk$.
\end{example}

\subsection{Examples of monadic division algebras}\label{subsec:MontoEssDA}

To construct more examples of monadic division algebras, we use monads that satisfy the following property.

\begin{definition}\label{def:strongMon}
    A monad $(T, \mu, \eta)$ on $\cC$ is \textit{left strong} if it is equipped with a natural transformation $\theta:=\{\theta_{X,Y}:X\otimes T(Y)\to T(X\otimes Y)\}_{X,Y\in\cC}$  ({\it left strength}) such that for all $X,Y,Z\in\cC$:

    \begin{enumerate}\renewcommand{\labelenumi}
    {\textnormal{(\roman{enumi})}}
        \item $\theta_{X,Y\otimes Z}\circ(\id_X\otimes\theta_{Y,Z}) = \theta_{X\otimes Y,Z}$;
        \qquad (iii)   $\theta_{X,Y}\circ(\id_X\otimes\mu_Y) = \mu_{X\otimes Y}\circ T(\theta_{X,Y})\circ\theta_{X,T(Y)}$;
        \smallskip
        \item $\theta_{\one,X} = \id_{T(X)}$; 
        \qquad \qquad  \qquad \qquad  \qquad   (iv)  $\theta_{X,Y}\circ (\id_X\otimes\eta_Y) = \eta_{X\otimes Y}$.
    \end{enumerate}

\noindent A left strong monad $(T,\mu, \eta,\theta)$ is said to be \textit{left very strong} if $\theta$ is a natural isomorphism. 
\textit{Right strong} and \textit{right very strong} monads on $\cC$ are defined analogously.
\end{definition}

The next result is straightforward to verify.

\begin{lemma}\label{lem:Monad to Alg}
If $(T, \mu, \eta, \theta)$ is left (resp., right) strong, then $T(\one) \in\mathsf{Alg}(\cC)$. Here, $m_{T(\one)}:= \mu_{\one} \, \theta_{T(\one), \one}$ (resp., $\mu_\one \,\theta_{\one,T(\one)}$), and $u_{T(\one)}:= \eta_{\one}$.   \qed
\end{lemma}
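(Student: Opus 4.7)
The plan is to verify the algebra axioms for $(T(\one), m_{T(\one)}, u_{T(\one)})$ directly, working only in the left strong case (the right version follows by the mirror argument). Throughout, I would exploit strictness to identify $T(\one) \otimes \one = T(\one) = \one \otimes T(\one)$, which keeps the subscripts on $\theta$ manageable; for readability I will write $\theta_{X,Y}$ without relabeling.

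For unitality, I would dispose of the right unit law first: by axiom (iv) at $(T(\one),\one)$,
$\theta_{T(\one),\one} \circ (\id_{T(\one)} \otimes \eta_\one) = \eta_{T(\one)}$,
and composing with $\mu_\one$ gives $\id_{T(\one)}$ by the monad unit axiom $\mu \circ \eta T = \id_T$ at $\one$. For the left unit law, I would use naturality of $\theta$ in its first argument along $\eta_\one \colon \one \to T(\one)$ together with axiom (ii) ($\theta_{\one,\one} = \id_{T(\one)}$) to rewrite $\theta_{T(\one),\one} \circ (\eta_\one \otimes \id_{T(\one)})$ as $T(\eta_\one)$, then finish with the other unit axiom $\mu \circ T\eta = \id_T$ at $\one$.

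The main step is associativity. Expand $m \circ (m \otimes \id_{T(\one)})$ and unwind the inner $m$ by naturality of $\theta$ in its first argument, producing
\[
\mu_\one \circ T(\mu_\one) \circ T(\theta_{T(\one),\one}) \circ \theta_{T(\one)\otimes T(\one),\,\one}.
\]
For the other side, $m \circ (\id_{T(\one)} \otimes m)$, I would first apply axiom (iii) at $(T(\one),\one)$ to replace $\theta_{T(\one),\one} \circ (\id_{T(\one)} \otimes \mu_\one)$ by $\mu_{T(\one)} \circ T(\theta_{T(\one),\one}) \circ \theta_{T(\one),T(\one)}$, and then apply axiom (i) at $(T(\one),T(\one),\one)$ to collapse $\theta_{T(\one),T(\one)} \circ (\id_{T(\one)} \otimes \theta_{T(\one),\one})$ into $\theta_{T(\one)\otimes T(\one),\,\one}$. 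After these substitutions, the two expressions differ only by replacing $\mu_\one \circ \mu_{T(\one)}$ with $\mu_\one \circ T(\mu_\one)$, which is exactly the associativity axiom of the monad $T$ instantiated at $\one$.

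I do not anticipate any real obstacle: each equality is a single appeal to a strength axiom, a monad axiom, or naturality, and the only bookkeeping concern is tracking the strict-monoidal collapses $T(\one) \otimes \one = T(\one)$ on the indices of $\theta$ and $\mu$. The right-strong case is obtained by reflecting every diagram left-to-right and invoking the corresponding right-strength axioms.
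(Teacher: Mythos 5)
Your proof is correct and follows essentially the same route the paper intends (the paper leaves this as a "straightforward to verify" diagram chase): unitality from axioms (ii), (iv), naturality of $\theta$, and the monad unit laws at $\one$; associativity from axioms (i), (iii), naturality of $\theta$ in the first variable, and the monad associativity $\mu_\one \circ T(\mu_\one) = \mu_\one \circ \mu_{T(\one)}$. Nothing further is needed.
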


\begin{proposition}\label{Prop:adjtriv to Ess} Let $T$ be a monad on a strict monoidal category $\cC$.
    \begin{enumerate}\renewcommand{\labelenumi}{\textnormal{(\roman{enumi})}}
        \item If $T$ is left very strong, then $T$ is adjunction-trivial if and only if $T(\one)\in r.\mathsf{MonDivAlg}(\cC)$.
        \smallskip
        \item If $T$ is right very strong, then $T$ is adjunction-trivial if and only if $T(\one)\in \ell.\mathsf{MonDivAlg}(\cC)$.
    \end{enumerate}
\end{proposition}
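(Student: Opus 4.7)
The plan is to reduce the biconditional to recognizing $T$ as isomorphic, as a monad, to the tensor-with-$T(\one)$ monad, from which the result follows immediately. For part~(i), I will construct a natural isomorphism $\alpha : (-\otimes T(\one)) \natisom T$ of monads on $\cC$ with components $\alpha_X := \theta_{X,\one}$. By strictness $X \otimes \one = X$, so $\alpha_X$ is a morphism $X \otimes T(\one) \to T(X)$ in $\cC$, which is invertible because $T$ is left very strong. The target $(-\otimes T(\one))$ carries the monad structure inherited from the algebra $T(\one)$ of Lemma~\ref{lem:Monad to Alg}.

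To check that $\alpha$ is a morphism of monads, two compatibilities must be verified. Compatibility with units, $\alpha_X \circ (\id_X \otimes u_{T(\one)}) = \eta_X$, reduces (since $u_{T(\one)} = \eta_\one$) to axiom~(iv) of Definition~\ref{def:strongMon} at $Y = \one$. Compatibility with multiplications,
\[
\alpha_X \circ (\id_X \otimes m_{T(\one)}) \;=\; \mu_X \circ T(\alpha_X) \circ \alpha_{X \otimes T(\one)},
\]
is verified by substituting $m_{T(\one)} = \mu_\one \circ \theta_{T(\one),\one}$, then using axiom~(iii) of Definition~\ref{def:strongMon} at $Y=\one$ to move $\mu_\one$ past the strength (yielding $\mu_X \circ T(\alpha_X) \circ \theta_{X,T(\one)}$ on the left), and finally applying axiom~(i) with $Y = T(\one)$, $Z = \one$ together with strictness to collapse $\theta_{X,T(\one)} \circ (\id_X \otimes \theta_{T(\one),\one})$ into $\theta_{X \otimes T(\one), \one} = \alpha_{X \otimes T(\one)}$. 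This diagram-chase, which ties together three of the four strength axioms and strict unitality, is the main technical step of the proof.

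Once $\alpha$ is established as an isomorphism of monads, standard monad theory induces equivalences of Eilenberg-Moore and Kleisli categories that commute with the comparison functors, so $T$ is adjunction-trivial if and only if $(-\otimes T(\one))$ is. The same induced equivalence $\cC^T \simeq \Mod\text{-}T(\one)(\cC)$ matches isoclasses of objects, so the nontriviality hypothesis $(\star)$ for $T(\one)$ is encoded by a corresponding nontriviality condition on $T$, and the biconditional becomes equivalent to $T(\one) \in r.\mathsf{MonDivAlg}(\cC)$ by Definition~\ref{def:monadic}. Part~(ii) is entirely symmetric: if $T$ is right very strong, $\alpha'_X := \theta'_{\one,X}$ yields a natural isomorphism of monads $(T(\one) \otimes -) \natisom T$, and the same diagrammatic verification (with left and right strength interchanged, using the corresponding formula $m_{T(\one)} = \mu_\one \circ \theta'_{\one,T(\one)}$ from Lemma~\ref{lem:Monad to Alg}) establishes the claim. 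The only anticipated obstacle is the multiplicativity check for $\alpha$; I expect no further conceptual difficulty.
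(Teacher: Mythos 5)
Your proposal is correct and follows essentially the same route as the paper: the paper's proof also rests on the observation that the left strength gives a natural isomorphism $T \cong (-\otimes T(\one))$ via $\theta_{-,\one}$, from which adjunction-triviality transfers between the two monads. You simply make explicit the monad-morphism verification (unit and multiplication compatibility via axioms (i), (iii), (iv) of Definition~\ref{def:strongMon}) that the paper leaves implicit.
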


\begin{proof}
In both cases, $T(\one)$ is an algebra in $\cC$ by Lemma~\ref{lem:Monad to Alg}.
Next, let $T$ be left very strong, with left strength $\theta$. Then, $T\cong (-\otimes T(\one))$ via the natural isomorphism $\theta_{-,\one}^{-1}$. It follows that $T$ is adjunction-trivial if and only if $(-\otimes T(\one))$ is adjunction-trivial, if and only if $T(\one)\in r.\mathsf{MonDivAlg}(\cC)$ by definition.    
Similarly, when $T$ is right very strong, $T\cong (T(\one)\otimes -)$, and part (ii) holds.
\end{proof}

\begin{example} Continuing Example~\ref{ex:monadVec} for $\cC = \mathsf{Vec}$ and $A \in \mathsf{Alg}(\mathsf{Vec})$,  we have that  $(- \otimes_\Bbbk A)$ is a left very strong monad on $\mathsf{Vec}$ with strength being the associativity of $\otimes_\Bbbk$. Now, $\mathsf{Vec}^{(- \otimes_\Bbbk A)} \simeq {\mathsf{Vec}}_{(- \otimes_\Bbbk A)}$  if and only if $\Bbbk \otimes_\Bbbk A \cong A$ is a right monadic division algebra in $\mathsf{Vec}$ (which happens precisely when $A$ is a division algebra over $\Bbbk$).
\end{example}

\begin{example}\label{ex4.2}
Consider the maybe monad $T$ on  $(\mathsf{Set},\sqcup,\emptyset)$, given by $T(-) := (-\sqcup\{*\})$, where $\sqcup$ is disjoint union. 
See \cite[Examples~5.1.4(i) and~5.3.2]{Riehl} for details; in particular, it is adjunction trivial. Also, $T$ is left very strong by the associativity of $\sqcup$.
Proposition~\ref{Prop:adjtriv to Ess} implies that $T(\emptyset)  \cong\{*\}$ is a right monadic division algebra in $(\mathsf{Set},\sqcup,\emptyset)$. Using that $X\sqcup Y\cong Y\sqcup X$ for $X,Y\in\mathsf{Set}$, we see that $T$ is right very strong. Hence, $\{*\}$ is also a left monadic division algebra in $(\mathsf{Set},\sqcup,\emptyset)$.
\end{example}

\begin{example} \label{ex:notverystrong}
Consider the free vector space monad $T$ on $(\mathsf{Set}, \times, \{\ast\})$, given by $T(X):= \Bbbk^X$, consisting of finitely supported functions $f: X \to \Bbbk$. See  \cite[Example 5.1.4(iii)]{Riehl}. We obtain that $\mathsf{Set}^T  \simeq \mathsf{Vec} \simeq {\mathsf{Set}}_T$. However,  $T$ is not left very strong as, in general, $X \times \Bbbk^Y \not \cong \Bbbk^{X \times Y}$ as sets. So, we cannot use Proposition~\ref{Prop:adjtriv to Ess} to get a left monadic division algebra in $\mathsf{Set}$. Still, see $\S$\ref{sec:divmonad} below.
\end{example}


\section{Discussion} \label{sec:directions}

We briefly discuss here potential research directions that may be of interest to the reader. 

\subsection{On division monads} \label{sec:divmonad}
One may want to refer to a monad $T$ on $\cA$ as a ``division monad'' when $\cA_T\simeq\cA^T$, instead of calling such monads adjunction-trivial. This would include monads that are not necessarily very strong, such as in Example~\ref{ex:notverystrong}. We inquire whether the scarcity of these types of monads mirrors the scarcity of division $\Bbbk$-algebras among the collection of $\Bbbk$-algebras. 




\subsection{On structural results for algebras in monoidal categories}

There are several classical results using  division $\Bbbk$-algebras that could be expanded to  general monoidal settings, e.g., Artin-Wedderburn Theorem.
Moreover, the classification of division algebras in various monoidal settings is open. For example, we expect an analogue of Frobenius's Theorem  (i.e., the only finite-dimensional division algebra over an algebraically closed field is the field itself) to hold in  finite monoidal settings.

\subsection{On the essential condition versus the simplistic condition} If one uses simplistic division algebras as  done in previous works (e.g., in \cite{GrossmanSnyder,Grossman,KongZheng}), then the supply of division algebras may be too abundant to make satisfactory progress. For instance, any simple module over a finite group $G$ yields a simplistic division algebra in the monoidal category of $G$-modules [Example~\ref{ex:grouptype}]. We propose it is that better to use the more restrictive class of essential/monadic division algebras to examine pertinent results for algebras in monoidal categories.

\subsection{On the left versus right division algebra conditions} In Proposition~\ref{prop:LiffR}, we proved that a left simplistic (resp., essential) division algebra in a pivotal multifusion category $\cC$ is a right simplistic (resp., essential) division algebra in $\cC$, and vice versa. After the appearance of our article, it was shown in work of Nakamura, Shibara, and Shimuzu that the result in the simplistic case holds when $\cC$ is a finite tensor category \cite{NSS2025}. There, the more common terminology, {\it left/right simple algebra in $\cC$}, is used instead of our terminology here. We all expect that such ``Left $\Leftrightarrow$ Right'' results hold in more general monoidal settings \cite{KS-JKCW}.


\bibliography{DivAlg}
\bibliographystyle{alpha}

\end{document}